\documentclass{article}
\usepackage{amsmath,amsthm,amscd,amssymb}
\usepackage[mathscr]{eucal}
\usepackage{amssymb}
\usepackage{latexsym}

\theoremstyle{definition}
\newtheorem{Def}{Definition}[section]
\newtheorem{Thm}[Def]{Theorem}
\newtheorem{Prop}[Def]{Proposition}
\newtheorem{Rem}[Def]{Remark}

\newtheorem{Cor}[Def]{Corollary}

\newtheorem{Lem}[Def]{Lemma}

\numberwithin{equation}{section}

\title{Supplement to a Shimura's theorem on Eisenstein series}


\date{}
\begin{document}
\maketitle
\author{Shoyu Nagaoka}
\footnote{\noindent
Shoyu Nagaoka\\
Dept. Mathematics, Yamato University\\
Suita, Osaka 564-0082,Japan\\
shoyu1122.sn@gmail.com
\\
\noindent
{\bf Mathematics subject classification 2020}: Primary 11F46, \and Secondary 11F55\\
\noindent
{\bf Key words}: Eisenstein series, Siegel modular forms, Hermitian modular forms}
\noindent
\begin{abstract}
\noindent
Shimura studied the analytic properties of the non-holomorphic Siegel
Eisenstein series and derived a residue formula.
Herein, we provide a refinement of his result for several types of Eisenstein series.
\end{abstract}

\section{Introduction}
\label{intro}
The Eisenstein series is an important concept in the field of automorphic forms
and has been studied by various researchers.

In \cite{Sh}, Shimura considered the Eisenstein series for various 
types of groups and
extensively studied their analytic properties. For example, he demonstrated
that the residue of an Eisenstein series at a certain point is a power of $\pi$ times a
modular form with rational Fourier coefficients (see theorem below).

In this paper, we show that this modular form can be easily
specified explicitly using the functional equation of the corresponding Eisenstein
series.

Subsequently, we explain the result in the case of the Eisenstein series for the
Siegel modular group $\varGamma_n:=\text{Sp}_n(\mathbb{Z})$.

For $n\in\mathbb{Z}_{>0}$ and $k\in 2\mathbb{Z}_{\geq 0}$,  we set
$$
E_k^{(n)}(Z,s):=
\text{det}(\text{Im}(Z))^s
\sum_{\binom{\,*\,\,*\,}{C\,D}\in  (\varGamma_n\cap P_n)\backslash \varGamma_n }
\text{det}(CZ+D)^{-k}|\text{det}(CZ+D)|^{-2s}.
$$
Here, $Z$ is a variable of the Siegel upper half space $\mathbb{H}_n$, of degree $n$,
$s$ is a complex variable, and $P_n$ is a parabolic subgroup 
$\{\binom{A\,B}{C\,D}\in \text{Sp}_n(\mathbb{R})\mid\,C=0_n\}$.
The abovementioned series converges with locally and uniformly on
$$
\left\{ (Z,s)\;\big{|}\; Z\in\mathbb{H}_n,\;\text{Re}(s)>(n+1-k)/2\;\right\}.
$$
As is well known from the Langlands theory, $E_k^{(n)}(Z,s)$ has a meromorphic
continuation to the whole complex $s$-plane. In \cite{Sh}, Shimura studied the
analytic properties for various types of Eisenstein series, including
the type mentioned herein.

His result in this case can be stated as follows.
\vspace{2mm}
\\
\textbf{Theorem.} (Shimura \cite{Sh}, {\scshape Proposition 10.3})
{\it The Eisenstein series $E_{\frac{n-1}{2}}^{(n)}(Z,s)$ has at most a simple
pole at $s=1$. The residue is $\pi^{-n}$ times a modular form $f$ of weight
$\frac{n-1}{2}$ for $\varGamma_n$ with rational Fourier coefficients.}
\vspace{2mm}
\\
One of the aims in this study is to specify the aforementioned modular form $f$.

The result in this case is as follows.
\vspace{2mm}
\\
\textbf{Theorem\; (SP case)} {\it The residue of $E_{\frac{n-1}{2}}^{(n)}(Z,s)$ at $s=1$
is expressed as
$$
\mathop{{\rm Res}}_{s=1}E_{\frac{n-1}{2}}^{(n)}(Z,s)=\pi^{-n}\cdot c_n\,E_{\frac{n-1}{2}}^{(n)}(Z,0),
$$
where the constant $c_n$ is explicitly defined as
$$
c_n=
 (-1)^{\frac{n-1}{4}}2^{-2n-2}\cdot 3^{-1}
    \frac{\left(\tfrac{n+3}{2}\right)!(n+1)!}
           {\left(\tfrac{n-1}{2}\right)!} 
     \frac{B_{\frac{n-1}{2}}} {B_{\frac{n+3}{2}}B_{n+1}B_{n-1}}.
$$
Here, $B_m$ is the $m$-th Bernoulli number.}
\vspace{4mm}
\\
This theorem asserts that the modular form $f$ in Shimura's theorem is 
$c_n\,E_{\frac{n-1}{2}}^{(n)}(Z,0)$ in this case.
\vspace{3mm}
\\
In the latter part of the paper, the
author provides similar results for the Eisenstein series of other types,
namely, the Hermitian Eisenstein series, Eisenstein series on a quaternion half-space,
and Eisenstein series on an exceptional domain.
\vspace{2mm}
\\
Some of the results presented herein have been reported in \cite{Nagaoka2},
albeit without proof. This paper provides the proof, including its amendments. 
\section{Siegel Eisenstein series}
\subsection{Siegel modular forms}
Let $\varGamma_n=\text{Sp}_n(\mathbb{Z})$ be the Siegel modular group of degree $n$
and $M_k(\varGamma_n)$ be the space of Siegel modular forms of weight $k$ for
$\varGamma_n$. Any element $F$ in $M_k(\varGamma_n)$ has a Fourier expansion
of the form
$$
F(Z)=\sum_{0\leq T\in \Lambda_n} a_F(T)\text{exp}(2\pi\sqrt{-1}\text{tr}(TZ)),\;
Z\in \mathbb{H}_n,
$$
where
$$
\Lambda_n:=\{\;T=(t_{ij})\in \text{Sym}_n(\mathbb{Q})\;|\; t_{ii},\,2t_{ij}\in\mathbb{Z}\,\}.
$$
For a subring, $R\subset \mathbb{C}$, we denote by $M_k(\varGamma_n)_R$
the space comprising
modular forms $F$ in $M_k(\varGamma_n)$,
whose Fourier coefficients $a_F(T)$ lie in $R$.

We refer to series $E_k^{(n)}(Z,s)$ (defined in the Introduction section) as the {\it Siegel Eisenstein series}.
If $k>n+1$, the Siegel Eisenstein series $E_k^{(n)}(Z,0)$ is holomorphic in $Z$, and it
is a typical example of an element in $M_k(\varGamma_n)_{\mathbb{Q}}$.
\subsection{Functional equation of Siegel Eisenstein series}
For $n\in\mathbb{Z}_{>0}$ and $k\in 2\mathbb{Z}_{\geq 0}$, we define
function $f_{n,k}(s)$ as
\begin{equation}
f_{n,k}(s):=\frac{\Gamma_n(s+\frac{k}{2})}{\Gamma_n(s)}\cdot \xi (2s)
                 \prod_{j=1}^{[\frac{n}{2}]}\xi (4s-2j),
\end{equation}
where
$$
\Gamma_n(s):=\pi^{\frac{n(n-1)}{4}}\prod_{j=0}^{n-1}\Gamma\left(s-\tfrac{j}{2}\right),
\qquad
\xi (s):=\pi^{-\frac{s}{2}}\Gamma\left(\tfrac{s}{2}\right)\zeta(s)=\xi (1-s).
$$
The Siegel Eisenstein series $E_k^{(n)}(Z,s)$ has a functional equation of the form
\begin{equation}
\label{FE}
E_k^{(n)}(Z,s)
=\frac{f_{n,k}\left(\frac{n+1}{2}-\frac{k}{2}-s\right)}{f_{n,k}\left(s+\frac{k}{2}\right)}
E_k^{(n)}\left(Z,\tfrac{n+1}{2}-k-s\right)
\end{equation}
(e.g., cf. \cite{Miz}).
\subsection{Analytic property of $\boldsymbol{E_k^{(n)}(Z,s)}$ at $\boldsymbol{s=0}$}
The analytic property of $E_k^{(n)}(Z,s)$ at $s=0$ has been studied by
Shimura \cite{Sh}, Weissauer \cite{Weis}, and subsequently by Haruki \cite{Haruki}.
\begin{Thm} (Weissauer \cite[Satz 17]{Weis}, Haruki \cite{Haruki})\\
\label{Wei}
\noindent
{\it \!\!{\rm (1)}\; If $k>0$, $E_k^{(n)}(Z,s)$ is holomorphic in $s$ at $s=0$.
\\
In case {\rm (1)}, we define $E_k^{(n)}(Z):=E_k^{(n)}(Z,0)$.\\
{\rm (2)}\;  $E_k^{(n)}(Z)$ is holomorphic except for the two cases, i.e.,
$$
k=\tfrac{n+2}{2},\;\tfrac{n+3}{2} \equiv 2 \pmod{4}.
$$
{\rm (3)}\; In the holomorphic case, $E_k^{(n)}(Z)$ has rational Fourier coefficients.}
\end{Thm}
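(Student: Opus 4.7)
The plan is to attack all three parts simultaneously through the explicit Fourier expansion of $E_k^{(n)}(Z,s)$, in the form due to Shimura and Kaufhold. Writing $Z=X+\sqrt{-1}Y$, the $T$-th coefficient (for $T\in\Lambda_n$) factors as an arithmetic ``singular series'' $\alpha(T,s)$ times a confluent hypergeometric factor $\omega(T,Y,s)$ of matrix argument. The factor $\omega$ is entire in $s$, so all analytic questions in $s$ reduce to questions about $\alpha(T,s)$. The latter is an explicit product of $\zeta$-values -- essentially the Euler factors appearing in the normalisation $f_{n,k}$ of~\eqref{FE} -- and, for degenerate $T$, is further governed by a lower-degree Siegel Eisenstein series via Siegel's $\Phi$-reduction.

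For part (1) I would evaluate $\alpha(T,0)$ and check that its denominators, essentially products involving $\zeta(k)$ and $\zeta(2k-2j)$ for $1\le j\le [n/2]$, are finite and nonzero whenever $k$ is a positive even integer, since none of the arguments equals $1$; this gives holomorphy in $s$ at $s=0$. For part (2), substituting $s=0$ into $\omega(T,Y,0)$ leaves the ordinary exponential $\exp(-2\pi\,\mathrm{tr}(TY))$ for positive-definite $T$ but produces extra non-holomorphic pieces (polynomial in the entries of $Y$ and $Y^{-1}$) for degenerate $T$. These pieces carry additional $\zeta$-factors that vanish generically in $k$, and the non-holomorphic corrections cancel, except at the special weights where a trivial zero of $\zeta(2k-2j)$ in the numerator collides with a needed zero in the denominator. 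Running through the rank strata of $T$ then pins down exactly the stated exceptional values $k=(n+2)/2$ and $k=(n+3)/2\equiv 2\pmod 4$.

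For part (3), once holomorphy is known each $\alpha(T,0)$ is a finite product of $\zeta$-values at positive even integers (which by Euler lie in $\pi^{\mathrm{even}}\mathbb{Q}^{\times}$) and at nonpositive integers (Euler--Siegel: rational), divided by the $\pi$-powers supplied by $f_{n,k}$; a direct $\pi$-power count shows the transcendental factors cancel. The main obstacle, and the only step that is not essentially mechanical once the Fourier expansion is written down, is the pole/zero bookkeeping in part (2): identifying \emph{exactly} the two exceptional weights requires simultaneously tracking trivial zeros of $\zeta$ against potential poles of $\omega$ across all ranks $0\le r\le n$, and the delicate congruence $(n+3)/2\equiv 2\pmod 4$ emerges only from such an explicit computation.
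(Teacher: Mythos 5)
The first thing to say is that the paper does not prove Theorem \ref{Wei} at all: it is imported verbatim from Weissauer \cite{Weis} and Haruki \cite{Haruki}, so there is no in-paper argument to compare yours against. Your sketch does point at the machinery those references actually use (the Fourier expansion whose $T$-th coefficient is a singular series $\alpha(T,s)$ times a confluent hypergeometric factor $\omega(T,Y,s)$), so the overall strategy is the right one.

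As a proof, however, it has genuine gaps, and not merely of the ``details omitted'' kind. First, the assertion that $\omega(T,Y,s)$ is entire in $s$ holds only for Shimura's normalization, in which the confluent hypergeometric integral is divided by explicit Gamma factors whose arguments depend on $k$, $n$, and the rank of $T$; at $s=0$ and small $k$ these Gamma factors have poles and zeros of their own, and they are precisely what interacts with the $\zeta$-factors to produce the exceptional weights. So the analytic questions do not all ``reduce to $\alpha(T,s)$.'' Second, your argument for part (1) fails as stated: you dismiss the denominators because ``none of the arguments equals $1$,'' but the danger is not the pole of $\zeta$ at $1$ --- it is the trivial zeros of $\zeta$ at negative even integers. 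For $k\le[\tfrac{n}{2}]$ the arguments $2k-2j$ with $1\le j\le[\tfrac{n}{2}]$ do hit negative even integers, so denominators do vanish, and holomorphy at $s=0$ requires showing that each such zero is cancelled by a matching zero upstairs (from the Gamma factors just mentioned or from the numerator of the Siegel series). That cancellation is the substance of Weissauer's Satz 17, not a triviality. Third, the identification of exactly the two exceptional weights $k=\tfrac{n+2}{2},\,\tfrac{n+3}{2}\equiv 2\pmod 4$ --- which you yourself single out as the only non-mechanical step --- is the theorem; describing it as ``pole/zero bookkeeping across rank strata'' without carrying it out leaves parts (2) and (3) unproved. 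In short: correct blueprint, but every load-bearing computation is missing, and the one explicit analytic claim you do make (part (1)) overlooks the actual source of difficulty.
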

\begin{Rem}
\label{vanish}
If we take the theory
of singular modular forms into account, it turns out that
$E_k^{(n)}(Z)$ vanishes when $k<\frac{n}{2}$ and
$k \equiv 2 \pmod{4}$.
\end{Rem}
\subsection{Shimura's result}
As stated in the Introduction section, Shimura proved the following result.
\begin{Thm} (Shimura, \cite[{\scshape Proposition 10.3}]{Sh})
\label{ShimuraMain}
{\it 
Assume that $\tfrac{n-1}{2}\in 2\mathbb{Z}_{>0}$. 
The Eisenstein series
 $E_{\frac{n-1}{2}}^{(n)}(Z,s)$ has at most a simple pole
at $s=1$. The residue is written as
$$
\mathop{{\rm Res}}_{s=1}E_{\frac{n-1}{2}}^{(n)}(Z,s)=\pi^{-n}\cdot f
$$
with some $f\in M_{\frac{n-1}{2}}(\varGamma_n)_{\mathbb{Q}}$.}
\end{Thm}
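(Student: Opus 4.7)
The plan is to deduce the theorem from the functional equation (2.2) by relating the behavior at $s=1$ to the known behavior at $s=0$. Observe that with $k=(n-1)/2$, the substitution $s=1$ sends the reflected argument $\frac{n+1}{2}-k-s$ precisely to $0$, so (2.2) takes the shape
\begin{equation*}
E_k^{(n)}(Z,s)=g(s)\,E_k^{(n)}\!\left(Z,\tfrac{n+1}{2}-k-s\right),\qquad g(s):=\frac{f_{n,k}\!\left(\tfrac{n+1}{2}-\tfrac{k}{2}-s\right)}{f_{n,k}\!\left(s+\tfrac{k}{2}\right)},
\end{equation*}
and the Eisenstein series on the right-hand side specializes to $E_k^{(n)}(Z,0)$ when $s=1$. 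Since $k=(n-1)/2$ is a positive even integer, we have $k>0$ and $k\neq(n+2)/2$; together with $n\equiv 1\pmod 4$ this rules out the exceptional values in Theorem \ref{Wei}, so $E_k^{(n)}(Z,0)$ is a holomorphic Siegel modular form of weight $k$ with rational Fourier coefficients.

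The key reduction is therefore
\begin{equation*}
\mathop{{\rm Res}}_{s=1} E_k^{(n)}(Z,s)=\left(\mathop{{\rm Res}}_{s=1} g(s)\right)\cdot E_k^{(n)}(Z,0),
\end{equation*}
and the problem is reduced to two analytic statements about the scalar function $g(s)$: first, that $g$ has at most a simple pole at $s=1$, and second, that its residue there equals a rational number times $\pi^{-n}$. Granting these, the residue is $\pi^{-n}$ times a rational multiple of $E_k^{(n)}(Z,0)\in M_k(\varGamma_n)_{\mathbb{Q}}$, which is exactly the claim.

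To verify the two statements about $g(s)$, I would substitute the definition (2.1) and express $g(s)$ as a product of Gamma quotients and shifted completed zeta values $\xi$. Writing $n=4m+1$, so that $k=2m$ and $[n/2]=2m$, the arguments appearing at $s=1$ form explicit arithmetic progressions of integers and half-integers; the $\pi^{n(n-1)/4}$ prefactor from each $\Gamma_n$ cancels in the Gamma quotient, while each $\xi$-value at a positive integer contributes, via the functional equation $\xi(s)=\xi(1-s)$ and the identity $\zeta(2\ell)\in\pi^{2\ell}\mathbb{Q}^{\times}$, either a pole or a rational multiple of a specific power of $\pi$. Summing these contributions should show that the total power of $\pi$ is exactly $-n$ and that only one simple pole survives.

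The main obstacle is precisely this bookkeeping: one must identify the single factor among the $\xi$'s or the $\Gamma$'s that produces the simple pole at $s=1$, verify that every other factor is finite and non-zero, and then compute the residue in closed form. The hypothesis $(n-1)/2\in 2\mathbb{Z}_{>0}$ is used to ensure that the relevant Bernoulli numbers $B_m$ controlling the $\xi$-values are non-zero, so that no spurious cancellations occur. Carrying out this computation cleanly yields the explicit constant $c_n$ stated in the Introduction, thus simultaneously proving Shimura's theorem and the refinement announced in the ``SP case'' theorem.
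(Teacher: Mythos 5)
Your proposal follows essentially the same route as the paper: specialize the functional equation (\ref{FE}) at $k=\tfrac{n-1}{2}$ so that $s=1$ reflects to $s=0$, invoke Theorem \ref{Wei} for the holomorphy in $s$ at $0$ and the rationality of $E_k^{(n)}(Z,0)$, and thereby reduce the theorem to showing that the scalar factor $g(s)$ has at most a simple pole at $s=1$ with residue in $\pi^{-n}\mathbb{Q}$. The bookkeeping you defer is exactly what the paper carries out (Propositions \ref{PP1} and \ref{PP2}): it splits $F_{n,1}=\gamma_{n,1}\cdot\xi_{n,1}$, checks the Gamma-part is finite and nonzero at $s=1$, identifies $\xi(4-4s)$ as the unique polar factor with residue $\tfrac{1}{4}$, and tallies the powers of $\pi$ to get exactly $\pi^{-n}$, as your outline anticipates.
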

\begin{Rem}
The Eisenstein series that Shimura considered in \cite{Sh} is
$$
\mathcal{E}_k^{(n)}(Z,s) =\sum \text{det}(CZ+D)^{-k}|\text{det}(CZ+D)|^{-s},
$$
i.e.,
$$
\mathcal{E}_k^{(n)}(Z,s)=
\text{det}(\text{Im}(Z))^{-\frac{s}{2}}E_k^{(n)}\left(Z,\frac{s}{2}\right)
$$
in our notation. Theorem \ref{ShimuraMain} is a translation 
of his original statement of {\scshape Proposition} 10.3 in \cite{Sh}.
\end{Rem}
\subsection{Main result for Siegel Eisenstein series}
\subsubsection{Residue of Siegel Eisenstein series}
To refine Shimura's result, we prove the following theorem.
\begin{Thm}
\label{Residue1}{\it 
Let $n$ and $m$ be integers that satisfy $n>m\geq 1$ and 
$\frac{n-m}{2}\in 2\mathbb{Z}_{>0}$.
Subsequently, the Siegel Eisenstein series $E_{\frac{n-m}{2}}^{(n)}(Z,s)$ has at most a simple
pole at $s=\frac{m+1}{2}$, and
\begin{equation}
\label{Res}
\mathop{{\rm Res}}_{s=\frac{m+1}{2}}E_{\frac{n-m}{2}}^{(n)}(Z,s)=c_{n,m}\,E_{\frac{n-m}{2}}^{(n)}(Z,0),
\end{equation}
where
\begin{align}
\label{explicitcnm} 
c_{n,m} & =(-1)^{[\frac{m+3}{2}]}\cdot\tfrac{1}{4}\cdot
      \frac{\Gamma_{\frac{n-m}{2}}\left(\tfrac{n-m}{2}\right)}{\Gamma_{\frac{n-m}{2}}\left(\tfrac{n+1}{2}\right)}
\prod_{j=0}^{\frac{n-m-4}{4}}\frac{\Gamma(\tfrac{1}{2}-j)}{\Gamma(-[\tfrac{m}{2}]-\tfrac{1}{2}-j)}
    \cdot \frac{([\frac{m+1}{2}]+j)!}{j!}
 \\ \nonumber
 & \cdot \frac{\xi\left(\tfrac{n-m}{2}\right)}{\xi\left(\tfrac{n+m+2}{2}\right)}\,
        \frac{\prod_{\substack{1\leq j\leq [\frac{n}{2}] \\ j\ne\frac{n-m}{2}}}\xi(n-m-2j)}
             {\prod_{j=1}^{[\frac{n}{2}]}\xi(n+m+2-2j)}.
\end{align}
}
\end{Thm}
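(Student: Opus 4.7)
The plan is to derive Theorem~\ref{Residue1} from the functional equation (\ref{FE}). With $k = \frac{n-m}{2}$ (a positive even integer by hypothesis), the Eisenstein-series argument $\frac{n+1}{2} - k - s$ on the right side of (\ref{FE}) vanishes precisely at $s = \frac{m+1}{2}$. Since $k > 0$ and $k < \frac{n+2}{2}$, parts (1) and (2) of Theorem~\ref{Wei} ensure that $E_k^{(n)}(Z, s')$ is holomorphic at $s' = 0$ with value $E_k^{(n)}(Z,0)$. Hence the Laurent expansion at $s = \frac{m+1}{2}$ gives
\[
\mathop{{\rm Res}}_{s=\frac{m+1}{2}} E_k^{(n)}(Z,s) = \Bigl(\mathop{{\rm Res}}_{s=\frac{m+1}{2}} \gamma_{n,k}(s)\Bigr)\cdot E_k^{(n)}(Z,0),
\qquad
\gamma_{n,k}(s) := \frac{f_{n,k}\bigl(\frac{n+1}{2}-\frac{k}{2}-s\bigr)}{f_{n,k}\bigl(s+\frac{k}{2}\bigr)},
\]
reducing (\ref{Res}) to the scalar residue of $\gamma_{n,k}$.

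Next I would split $f_{n,k}(t) = A(t)\,B(t)$ with $A(t) := \Gamma_n(t+k/2)/\Gamma_n(t) = \prod_{j=0}^{n-1}(t-j/2)_{k/2}$ (a polynomial in $t$) and $B(t) := \xi(2t)\prod_{j=1}^{[n/2]}\xi(4t-2j)$. At the numerator evaluation point $t_1 = k/2$, the Pochhammer factors with $j = k+2i$, $i=0,\dots,k/2-1$, each contribute a simple zero to $A$, while the $j=k$ term $\xi(4t-2k)$ of $B$ contributes the single pole coming from $\xi(0)$. At the denominator point $t_2 = \frac{m+n+2}{4}$, the analogous indices $j = \frac{m+n+2}{2}+2i$ all lie in $\{0,\dots,n-1\}$ (using $m \geq 1$), so $A$ has a zero of order $k/2$, and the hypothesis $m \geq 1$ keeps every argument of $B$ away from the simple poles $\{0,1\}$ of $\xi$. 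Consequently ${\rm ord}_{t_1} f_{n,k} = k/2 - 1$ and ${\rm ord}_{t_2} f_{n,k} = k/2$, so $\gamma_{n,k}$ has exactly a simple pole at $s = \frac{m+1}{2}$, which already recovers Shimura's qualitative claim (Theorem~\ref{ShimuraMain}) for general $m$.

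To extract the residue, I would expand
\[
A(t) \sim A_0\,(t-t_1)^{k/2},\quad B(t) \sim \frac{B_{-1}}{t-t_1}\text{ near }t_1;\qquad
A(t) \sim A_0'\,(t-t_2)^{k/2},\quad B(t_2) = B_0',
\]
which yields $\mathop{{\rm Res}}_{s=\frac{m+1}{2}} \gamma_{n,k}(s) = (-1)^{k/2-1}\, A_0 B_{-1}/(A_0' B_0')$. Since $\xi(s) \sim -1/s$ at $s = 0$, one has $B_{-1} = -\tfrac{1}{4}\,\xi(k)\prod_{j\ne k}\xi(2k-2j)$, which already produces the overall $\tfrac14$ and the explicit $\xi$-ratio on the second line of (\ref{explicitcnm}). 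What remains is to identify $A_0/A_0'$, the product of the surviving Pochhammer factors at $t_1$ and $t_2$, with the gamma-quotient part of the formula: a selected subfamily repackages into $\Gamma_k(k)/\Gamma_k(\tfrac{n+1}{2})$, and the remaining half-integer shifted factors, which come in $z \leftrightarrow 1-z$ pairs, are converted via $\Gamma(z)\Gamma(1-z) = \pi/\sin(\pi z)$ into the residual product $\prod_{j=0}^{(n-m-4)/4}\frac{\Gamma(1/2-j)}{\Gamma(-[m/2]-1/2-j)}\cdot\frac{([\frac{m+1}{2}]+j)!}{j!}$. The sign $(-1)^{[(m+3)/2]}$ accumulates from $(-1)^{k/2-1}$, the minus in $B_{-1}$, and the reflection-formula signs. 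This last step—sorting the surviving factors and tracking the signs uniformly in the parity of $m$—is the main obstacle.
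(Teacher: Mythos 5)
Your proposal follows essentially the same route as the paper. Your $\gamma_{n,k}(s)$ is the paper's $F_{n,m}(s)$ in (\ref{FEQ}), and your split $f_{n,k}=A\cdot B$ reproduces exactly the paper's factorization $F_{n,m}=\gamma_{n,m}\,\xi_{n,m}$ of (\ref{Fnm}): one has $A(t_1(s))/A(t_2(s))=\gamma_{n,m}(s)$ and $B(t_1(s))/B(t_2(s))=\xi_{n,m}(s)$, and your isolation of the single polar factor $\xi(4t-2k)$ (residue $-\tfrac14$ in $t$, hence $+\tfrac14$ after the chain rule) is Lemma \ref{xinm}\,(2). The analytic skeleton is correct throughout: holomorphy of the Eisenstein series at $s=0$, zeros of order $k/2$ of the polynomial $A$ at both evaluation points $t_1=k/2$ and $t_2=\tfrac{n+m+2}{4}$ cancelling so that only the simple pole of $B$ survives, and the resulting $\tfrac14$ times the $\xi$-ratio; your order-counting via the Pochhammer representation of $A$ is a clean substitute for the paper's cancellation law (Lemma \ref{cancel}). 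The one step you have not carried out is the one you flag yourself: evaluating $A_0/A_0'$ and matching it, signs included, to the first line of (\ref{explicitcnm}). That evaluation is the actual content of Proposition \ref{P1} and occupies most of the paper's proof; the paper does it by reducing the $\Gamma_n$-quotients to $\Gamma_{\frac{n-m}{2}}$-quotients (Lemma \ref{cancel}), splitting off the manifestly finite factor $\gamma_{n,m}^{(\mathrm{I})}$, and resolving the indeterminate factor $\gamma_{n,m}^{(\mathrm{II})}$ with the limit identity (\ref{Id}), $\lim_{t\to 0}\Gamma(t-j)/\Gamma(-t-a-j)=(-1)^{a+1}(a+j)!/j!$ --- which is precisely the reflection-formula computation you gesture at. So the plan is sound and coincides with the paper's, but as written it establishes the location and order of the pole and the $\xi$-part of $c_{n,m}$ while leaving the $\Gamma$-part of the constant and the sign $(-1)^{[\frac{m+3}{2}]}$ underived.
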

When $\frac{n-m}{2} \equiv 2 \pmod{4}$, $E_{\frac{n-m}{2}}^{(n)}(Z,0)$ vanishes
identically because of the theory of singular modular forms (Remark \ref{vanish}).
Hence, we have the following corollary.
\begin{Cor}{\it
If $\frac{n-m}{2} \equiv 2 \pmod{4}$, $E_{\frac{n-m}{2}}^{(n)}(Z,s)$ is holomorphic at
$s=\frac{m+1}{2}$.}
\end{Cor}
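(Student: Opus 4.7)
The plan is to combine Theorem~\ref{Residue1} with Remark~\ref{vanish}. Since Theorem~\ref{Residue1} already guarantees that the pole of $E_{\frac{n-m}{2}}^{(n)}(Z,s)$ at $s=\frac{m+1}{2}$ is at most simple, and its residue equals $c_{n,m}\,E_{\frac{n-m}{2}}^{(n)}(Z,0)$, the corollary will follow as soon as one checks that the holomorphic modular form $E_{\frac{n-m}{2}}^{(n)}(Z,0)$ vanishes identically under the congruence hypothesis. Indeed, if the residue of an at-most-simple pole is zero, the function is holomorphic at that point.

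First, I would verify that $E_k^{(n)}(Z,0)$ is well-defined and holomorphic in $Z$ for the weight $k:=\tfrac{n-m}{2}$ of interest. Since $n>m\geq 1$, we have $k>0$, so Theorem~\ref{Wei}(1) applies and $E_k^{(n)}(Z):=E_k^{(n)}(Z,0)$ is defined. The exceptional weights of Theorem~\ref{Wei}(2) are $k=\tfrac{n+2}{2}$ and (when $\equiv 2\pmod 4$) $k=\tfrac{n+3}{2}$; but here $k=\tfrac{n-m}{2}\leq \tfrac{n-1}{2}<\tfrac{n+2}{2}$, so neither exceptional case is reached, and $E_k^{(n)}(Z)$ is a genuine holomorphic element of $M_k(\varGamma_n)_{\mathbb{Q}}$.

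Next, I would invoke Remark~\ref{vanish}: the weight satisfies $k=\tfrac{n-m}{2}<\tfrac{n}{2}$ (strictly, because $m\geq 1$), and by hypothesis $k\equiv 2\pmod 4$. These are exactly the conditions under which the theory of singular modular forms forces $E_k^{(n)}(Z)\equiv 0$. Substituting this vanishing into the residue formula \eqref{Res} of Theorem~\ref{Residue1} gives
\[
\mathop{\mathrm{Res}}_{s=\frac{m+1}{2}}E_{\frac{n-m}{2}}^{(n)}(Z,s)
\;=\; c_{n,m}\cdot E_{\frac{n-m}{2}}^{(n)}(Z,0) \;=\; 0.
\]

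Finally, combining the vanishing of the residue with the bound on the order of the pole from Theorem~\ref{Residue1}, the singularity at $s=\tfrac{m+1}{2}$ is removable, so $E_{\frac{n-m}{2}}^{(n)}(Z,s)$ is holomorphic there, as claimed. There is essentially no obstacle in this argument: the analytic input is entirely supplied by Theorem~\ref{Residue1}, and the vanishing input by the singular-modular-form criterion recorded in Remark~\ref{vanish}; the only thing to be careful about is confirming that the parameter range $n>m\geq 1$ together with $\tfrac{n-m}{2}\equiv 2\pmod 4$ avoids the exceptional weights of Theorem~\ref{Wei}(2), which it does.
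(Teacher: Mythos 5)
Your argument is correct and is essentially the paper's own proof: the author likewise deduces the corollary by noting that Remark~\ref{vanish} forces $E_{\frac{n-m}{2}}^{(n)}(Z,0)\equiv 0$ when $\frac{n-m}{2}\equiv 2\pmod 4$, so the residue in Theorem~\ref{Residue1} vanishes and the at-most-simple pole is removable. Your additional check that the weight $\frac{n-m}{2}<\frac{n+2}{2}$ avoids the exceptional cases of Theorem~\ref{Wei}(2) is a sensible verification the paper leaves implicit, but it does not change the route.
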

Next, we prove Theorem \ref{Residue1}.
First, we remark that the holomorphy of
$E_{\frac{n-m}{2}}^{(n)}(Z,s)$ at $s=0$ is guaranteed by 
Theorem \ref{Wei},  (2).
The functional equation of $E_{\frac{n-m}{2}}^{(n)}(Z,s)$ can be written as
\begin{equation}
\label{FEQ}
E_{\frac{n-m}{2}}^{(n)}(Z,s)=F_{n,m}(s)\,E_{\frac{n-m}{2}}^{(n)}\left(Z,\tfrac{m+1}{2}-s\right),
\end{equation}
where
\begin{align}
& F_{n,m}(s)=\gamma_{n,m}(s)\,\xi_{n,m}(s),   \label{Fnm} \\
& \gamma_{n,m}(s):=\frac{\Gamma_n\left(\frac{n+1}{2}-s\right)}
                                     {\Gamma_n\left(\frac{n+m+2}{4}-s\right)}\cdot
                                     \frac{\Gamma_n\left(s+\frac{n-m}{4}\right)}
                                     {\Gamma_n\left(s+\frac{n-m}{2}\right)},  \nonumber\\
& 
\xi_{n,m}(s):=\frac{\xi\left(\frac{n+m+2}{2}-2s\right)}{\xi\left(2s+\frac{n-m}{2}\right)}
                      \prod_{j=1}^{\left[\frac{n}{2}\right]}
                      \frac{\xi((n+m+2)-4s-2j)}{\xi (4s+(n-m)-2j)}. \nonumber
\end{align}
To prove Theorem \ref{Residue1}, it is sufficient to demonstrate that 
\begin{equation}
\label{main}
F_{n,m}(s) \text{\;has a simple pole at}\; s=\tfrac{m+1}{2},\;
\text{and the residue is}\;c_{n,m}
\end{equation}
because $E_{\frac{n-m}{2}}^{(n)}(Z,s)$ is holomorphic at $s=0$ (cf. Theorem \ref{Wei}). 
Thus, if we set
$$
c_{n,m}=\mathop{{\rm Res}}_{s=\frac{m+1}{2}}\,F_{n,m}(s),
$$
then the theorem is proved.
Here, we show the proof of (\ref{main}).
\subsubsection{Analysis of $\gamma$-factor $\boldsymbol{\gamma_{n,m}(s)}$}
\begin{Prop}
\label{P1}{\it 
Function $\gamma_{n,m}(s)$ in {\rm (\ref{Fnm})} is holomorphic at $s=\frac{m+1}{2}$ and
\begin{equation}
\label{gammanm}
\gamma_{n,m}\left(\tfrac{m+1}{2}\right)
=
(-1)^{[\frac{m+3}{2}]}
\cdot
\frac{\Gamma_{\frac{n-m}{2}}\left(\tfrac{n-m}{2}\right)}{\Gamma_{\frac{n-m}{2}}\left(\tfrac{n+1}{2}\right)}
\prod_{j=0}^{\frac{n-m-4}{4}}\frac{\Gamma(\tfrac{1}{2}-j)}{\Gamma(-[\tfrac{m}{2}]-\tfrac{1}{2}-j)}
    \cdot \frac{([\frac{m+1}{2}]+j)!}{j!}.
\end{equation}
}
\end{Prop}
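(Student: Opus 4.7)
The plan is to evaluate $\gamma_{n,m}(s)$ at $s=\tfrac{m+1}{2}$ by a direct computation of a product of $\Gamma$-values, with careful tracking of the poles that appear in both numerator and denominator.

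First, I would substitute $s=\tfrac{m+1}{2}$ into the four arguments of the $\Gamma_n$-factors in the definition of $\gamma_{n,m}$. A direct computation gives
\[
\gamma_{n,m}\!\left(\tfrac{m+1}{2}\right)
=\frac{\Gamma_n\!\left(\tfrac{n-m}{2}\right)\Gamma_n\!\left(\tfrac{n+m+2}{4}\right)}
      {\Gamma_n\!\left(\tfrac{n-m}{4}\right)\Gamma_n\!\left(\tfrac{n+1}{2}\right)}.
\]
To analyse this ratio near its (candidate) poles I would set $s=\tfrac{m+1}{2}+t$ and study the limit $t\to 0$.

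Next, I would expand each $\Gamma_n(x)=\pi^{n(n-1)/4}\prod_{j=0}^{n-1}\Gamma(x-j/2)$; the four $\pi$-powers cancel. A large number of individual $\Gamma$-factors coincide between the numerator and denominator, because the argument pairs $\tfrac{n-m}{2}$ vs.\ $\tfrac{n+1}{2}$ and $\tfrac{n-m}{4}$ vs.\ $\tfrac{n+m+2}{4}$ differ by an amount compatible with $\tfrac{m+1}{2}$. After pairwise cancellation, the first $\tfrac{n-m}{2}$ factors of $\Gamma_n(\tfrac{n-m}{2})/\Gamma_n(\tfrac{n+1}{2})$ assemble precisely into $\Gamma_{(n-m)/2}(\tfrac{n-m}{2})/\Gamma_{(n-m)/2}(\tfrac{n+1}{2})$, while the remaining factors from all four $\Gamma_n$'s reduce (after a reindexing) to a residual product indexed by $k=0,\ldots,\tfrac{n-m}{2}-1$.

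I would then split the residual product according to the parity of $k$. The even values $k=2j$ (for $j=0,\ldots,\tfrac{n-m}{4}-1$) contribute finite ratios $\Gamma(\tfrac{1}{2}-j)/\Gamma(-[\tfrac{m}{2}]-\tfrac{1}{2}-j)$, matching the proposition verbatim. For the odd values $k=2j+1$, both the numerator and denominator of the original expression have poles at $t=0$; applying the Laurent expansion $\Gamma(-N+\varepsilon)=\tfrac{(-1)^N}{N!\,\varepsilon}+O(1)$, the $1/t$ singularities cancel pairwise and the surviving residue ratio produces exactly $([\tfrac{m+1}{2}]+j)!/j!$, up to a sign. This simultaneously establishes the holomorphy of $\gamma_{n,m}(s)$ at $s=\tfrac{m+1}{2}$ and reveals the advertised product structure.

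The main technical obstacle is the bookkeeping of signs, which come from two sources: (i) the factors $(-1)^{N+1}$ arising each time one evaluates $\Gamma(-N-t)$ near $t=0$, and (ii) the intrinsic sign of $\Gamma$ at the negative half-integers appearing in the finite ratios. The number of residue pairs and the number of sign-bearing half-integer $\Gamma$-values each depend on $n$, but their contributions to the overall sign turn out to cancel each other. Handling the two parities of $m$ in parallel via the floor-bracket notation, the residual sign collapses to the uniform factor $(-1)^{[(m+3)/2]}$ stated in the proposition, completing the proof.
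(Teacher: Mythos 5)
Your proposal is correct and follows essentially the same route as the paper: your ``pairwise cancellation'' of $\Gamma$-factors is exactly the paper's Cancellation Lemma reducing the $\Gamma_n$-ratios to $\Gamma_{\frac{n-m}{2}}$-ratios, your split into the finite block $\Gamma_{\frac{n-m}{2}}\left(\tfrac{n-m}{2}\right)/\Gamma_{\frac{n-m}{2}}\left(\tfrac{n+1}{2}\right)$ plus a residual product over $k=0,\ldots,\tfrac{n-m}{2}-1$ is the paper's decomposition $\gamma_{n,m}=\gamma_{n,m}^{(\mathrm{I})}\cdot\gamma_{n,m}^{(\mathrm{II})}$, and your parity split together with the Laurent expansion of $\Gamma$ at non-positive integers is precisely the paper's limit identity $\lim_{t\to 0}\Gamma(t-j)/\Gamma(-t-a-j)=(-1)^{a+1}(a+j)!/j!$. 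The one step you defer, the final sign bookkeeping, is likewise only asserted rather than computed in the paper, so nothing essential is missing relative to the published argument.
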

\begin{proof}
The factors that appear in the definition of $\gamma_{n,m}(s)$
can be simplified using the following formulas.
\begin{Lem}
(Cancellation law)
\label{cancel}
$$
\frac{\Gamma_n\left(\frac{n+1}{2}-s\right)}
       {\Gamma_n\left(\frac{n+m+2}{4}-s\right)}
=
\frac{\Gamma_{\frac{n-m}{2}}\left(\frac{n+1}{2}-s\right)}
       {\Gamma_{\frac{n-m}{2}}\left(\frac{1}{2}-s\right)},\qquad
\frac{\Gamma_n\left(s+\frac{n-m}{4}\right)}
       {\Gamma_n\left(s+\frac{n-m}{2}\right)}
=
\frac{\Gamma_{\frac{n-m}{2}}\left(s-\frac{m}{2}\right)}
       {\Gamma_{\frac{n-m}{2}}\left(s+\frac{n-m}{2}\right)}.
$$
\end{Lem}
This lemma results in the decomposition of $\gamma_{n,m}(s)$ as follows:
\begin{align}
\gamma_{n,m}(s)= & \gamma_{n,m}^{(\text{I})}(s)\cdot \gamma_{n,m}^{(\text{II})}(s), \label{gamma}\\
      & \gamma_{n,m}^{(\text{I})}(s)=
      \frac{\Gamma_{\frac{n-m}{2}}\left(\frac{n+1}{2}-s\right)}
       {\Gamma_{\frac{n-m}{2}}\left(s+\frac{n-m}{2}\right)}, \quad
      \gamma_{n,m}^{(\text{II})}(s)=
        \frac{\Gamma_{\frac{n-m}{2}}\left(s-\frac{m}{2}\right)}
       {\Gamma_{\frac{n-m}{2}}\left(\frac{1}{2}-s\right)}. \nonumber
\end{align}
\textbf{Calculation of} $\boldsymbol{\gamma_{n,m}^{(\text{I})}(s)}$:
\vspace{1mm}
\\
Functions $\Gamma_{\frac{n-m}{2}}\left(\frac{n+1}{2}-s\right)$ and $\Gamma_{\frac{n-m}{2}}\left(s+\frac{n-m}{2}\right)$
are holomorphic at $s=\frac{m+1}{2}$ and thier values at $s=\frac{m+1}{2}$ are non-zero.
Hence, $\gamma_{n,m}^{(\text{I})}(s)$ is
holomorphic at $s=\frac{m+1}{2}$, and 
\begin{equation}
\label{gammanmI}
\gamma_{n,m}^{(\text{I})}\left( \tfrac{m+1}{2}\right)
=
\frac{\Gamma_{\frac{n-m}{2}}\left(\tfrac{n-m}{2}\right)}{\Gamma_{\frac{n-m}{2}}\left(\tfrac{n+1}{2}\right)}.
\end{equation}
\textbf{Calculation of} $\boldsymbol{\gamma_{n,m}^{(\text{II})}(s)}$:
\vspace{1mm}
\\
Direct calculation shows
\begin{align}
\lim_{s\to\frac{m+1}{2}}
\gamma_{n,m}^{(\text{II})}(s) & =\lim_{s\to\frac{m+1}{2}}
        \frac{\Gamma_{\frac{n-m}{2}}\left(s-\frac{m}{2}\right)}
       {\Gamma_{\frac{n-m}{2}}\left(\frac{1}{2}-s\right)}
       =
       \lim_{t\to 0}\frac{\Gamma_{\frac{n-m}{2}}(t+\frac{1}{2})}{\Gamma_{\frac{n-m}{2}}(-t-\frac{m}{2})} \nonumber \\
& =\prod_{j=0}^{\frac{n-m-4}{4}}\frac{\Gamma(\tfrac{1}{2}-j)}{\Gamma(-[\tfrac{m}{2}]-\tfrac{1}{2}-j)}
        \cdot
\left(\lim_{t\to 0}\frac{\Gamma(t-j)}{\Gamma(-t-[\frac{m+1}{2}]-j)}\right)
        \nonumber \\
& =(-1)^{[\frac{m+3}{2}]}\prod_{j=0}^{\frac{n-m-4}{4}}\frac{\Gamma(\tfrac{1}{2}-j)}
     {\Gamma(-[\tfrac{m}{2}]-\tfrac{1}{2}-j)}
    \cdot \frac{([\frac{m+1}{2}]+j)!}{j!}.
 \label{gammanmII}
\end{align}
In the calculation above, we used identity
\begin{equation}
\label{Id}
\lim_{t\to 0}\frac{\Gamma(t-j)}{\Gamma(-t-a-j)}=(-1)^{a+1}\frac{(a+j)!}{j!},
\end{equation}
for $a\in\mathbb{N}$.
Combining (\ref{gammanmI}), (\ref{gammanmII}), and (\ref{gamma}), we obtain (\ref{gammanm}).
This completes the proof of Proposition \ref{P1}.
\end{proof}
\subsubsection{Analysis of $\xi$-factor $\boldsymbol{\xi_{n,m}(s)}$}
\label{xi}
\begin{Prop}
\label{P2}
{\it 
Function $\xi_{n,m}(s)$ in {\rm (\ref{Fnm})} has a simple pole at $s=\frac{m+1}{2}$ and
\begin{equation}
\label{propxinm}
\mathop{{\rm Res}}_{s=\frac{m+1}{2}}\xi_{n,m}(s)
=
\tfrac{1}{4}
\cdot
\frac{\xi\left(\tfrac{n-m}{2}\right)}{\xi\left(\tfrac{n+m+2}{2}\right)}\,
        \frac{\prod_{\substack{1\leq j\leq [\frac{n}{2}] \\ j\ne\frac{n-m}{2}}}\xi(n-m-2j)}
             {\prod_{j=1}^{[\frac{n}{2}]}\xi(n+m+2-2j)}.
\end{equation}
}
\end{Prop}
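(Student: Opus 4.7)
\textbf{Proof plan for Proposition~\ref{P2}.} The plan is to substitute $s=\tfrac{m+1}{2}$ directly into each $\xi$-factor of (\ref{Fnm}), isolate the unique factor that becomes singular, and read off the residue by a chain-rule Laurent expansion. The analytic facts I will use throughout are that $\xi(s)=\pi^{-s/2}\Gamma(s/2)\zeta(s)$ is meromorphic on $\mathbb{C}$ with simple poles only at $s=0$ and $s=1$ (of residues $-1$ and $+1$), that $\xi(s)=\xi(1-s)$, and that $\xi$ is finite and nonzero at every integer different from $0$ and $1$ (non-trivial zeros of $\zeta$ lie in the open critical strip, and the trivial zeros are cancelled by the $\Gamma$-poles).

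First I would tabulate the specializations at $s=\tfrac{m+1}{2}$. The prefactor $\xi(\tfrac{n+m+2}{2}-2s)/\xi(2s+\tfrac{n-m}{2})$ collapses to $\xi(\tfrac{n-m}{2})/\xi(\tfrac{n+m+2}{2})$; the hypothesis $\tfrac{n-m}{2}\in 2\mathbb{Z}_{>0}$ gives $\tfrac{n-m}{2}\ge 2$ and $\tfrac{n+m+2}{2}\ge 3$, so this ratio is finite and nonzero. Inside the product over $1\le j\le[\tfrac{n}{2}]$, the numerator factor $\xi((n+m+2)-4s-2j)$ specializes to $\xi(n-m-2j)$ and the denominator $\xi(4s+(n-m)-2j)$ to $\xi(n+m+2-2j)$. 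Every denominator argument satisfies $n+m+2-2j\ge m+2\ge 3$, so every denominator value is finite nonzero. Every numerator argument is an even integer; it equals $0$ precisely when $j=\tfrac{n-m}{2}$, which lies in $\{1,\dots,[\tfrac{n}{2}]\}$ since $\tfrac{n-m}{2}\ge 2\ge 1$ and, together with $m\ge 1$, $\tfrac{n-m}{2}\le[\tfrac{n}{2}]$. For every other $j$ the numerator equals $\xi$ at a nonzero even integer, hence finite nonzero. This localizes the pole of $\xi_{n,m}$ at $s=\tfrac{m+1}{2}$: it is simple and produced solely by the $j=\tfrac{n-m}{2}$ term in the numerator.

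To finish I linearize that distinguished factor. Writing $t=s-\tfrac{m+1}{2}$, it reads $\xi((n+m+2)-4s-(n-m))=\xi(-4t)$, so $\mathop{{\rm Res}}_{u=0}\xi(u)=-1$ together with the chain rule gives $\xi(-4t)=\tfrac{1}{4t}+O(1)$ near $t=0$. Multiplying this local residue $\tfrac{1}{4}$ by the values at $s=\tfrac{m+1}{2}$ of all remaining factors --- the prefactor $\xi(\tfrac{n-m}{2})/\xi(\tfrac{n+m+2}{2})$, the restricted numerator product over $j\ne\tfrac{n-m}{2}$, and the full denominator product $\prod_{j=1}^{[\tfrac{n}{2}]}\xi(n+m+2-2j)$ whose $j=\tfrac{n-m}{2}$ entry is the finite value $\xi(2m+2)$ --- assembles exactly (\ref{propxinm}). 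No serious obstacle arises; the only care needed is the integrality/parity bookkeeping that rules out any incidental $\xi$-value landing on $0$ or $1$ other than the intended one at $j=\tfrac{n-m}{2}$, which was handled in the previous paragraph.
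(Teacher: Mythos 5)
Your proposal is correct and follows essentially the same route as the paper: you isolate the single singular factor coming from $j=\tfrac{n-m}{2}$, namely $\xi(2m+2-4s)$ (the paper's $\xi_{n,m}^{(\mathrm{II})}$), compute its residue $\tfrac14$ at $s=\tfrac{m+1}{2}$, and multiply by the value of the remaining holomorphic part, which is exactly the paper's decomposition $\xi_{n,m}=\xi_{n,m}^{(\mathrm{I})}\xi_{n,m}^{(\mathrm{II})}$ in Lemma~\ref{xinm}. Your parity/integrality bookkeeping ruling out any other pole or zero matches the paper's argument, so no further changes are needed.
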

To prove Proposition \ref{P2},
we decompose $\xi_{n,m}(s)$ into two factors as follows:
\begin{align}
\xi_{n,m}(s)&= \xi_{n,m}^{(\text{I})}(s)\,\xi_{n,m}^{(\text{II})}(s), \label{xidecomp}\\
   &
    \xi_{n,m}^{(\text{I})}(s)=\frac{\xi\left(\frac{n+m+2}{2}-2s\right)}{\xi\left(2s+\frac{n-m}{2}\right)}
    \frac{\prod_{\substack{1\leq j\leq [\frac{n}{2}] \\ j\ne\frac{n-m}{2}}}\xi((n+m+2)-4s-2j)}
    {\prod_{j=1}^{[\frac{n}{2}]}\xi(4s+(n-m)-2j)}, \nonumber \\
   & \xi_{n,m}^{(\text{II})}(s)=\xi(2m+2-4s).\nonumber
\end{align}
This decomposition means that factor $\xi_{n,m}^{(\text{I})}(s)$ is obtained by extracting
factor $\xi_{n,m}^{(\text{II})}(s)=\xi(2m+2-4s)$ from $\xi_{n,m}(s)$.
\begin{Lem}
\label{xinm}
\;{\it 
{\rm (1)} Function $\xi_{n,m}^{({\rm I})}(s)$ is holomorphic at $s=\frac{m+1}{2}$
 and
\begin{equation}
\label{xiI}
\xi_{n,m}^{({\rm I})}\left(\tfrac{m+1}{2}\right)
=
\frac{\xi\left(\tfrac{n-m}{2}\right)}{\xi\left(\tfrac{n+m+2}{2}\right)}\,
        \frac{\prod_{\substack{1\leq j\leq [\frac{n}{2}] \\ j\ne\frac{n-m}{2}}}\xi(n-m-2j)}
             {\prod_{j=1}^{[\frac{n}{2}]}\xi(n+m+2-2j)}.
\end{equation}
{\rm (2)}\; Function $\xi_{n,m}^{({\rm II})}(s)$ has a simple pole at $s=\frac{m+1}{2}$ and
\begin{equation}
\label{xiII}
\mathop{{\rm Res}}_{s=\frac{m+1}{2}}
\xi_{n,m}^{({\rm II})}(s)=\tfrac{1}{4}.
\end{equation}
}
\end{Lem}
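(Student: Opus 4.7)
The plan is to treat parts (1) and (2) separately, relying throughout on two classical facts about $\xi(w)=\pi^{-w/2}\Gamma(w/2)\zeta(w)$: it has simple poles only at $w=0$ and $w=1$ (the trivial zeros of $\zeta$ at negative even integers being cancelled by the poles of $\Gamma(w/2)$, so $\xi$ is finite and nonzero at every other integer), and the residues at the two poles are $\mathop{{\rm Res}}_{w=1}\xi(w)=\pi^{-1/2}\Gamma(1/2)\cdot 1=1$ and, via the functional equation $\xi(w)=\xi(1-w)$, $\mathop{{\rm Res}}_{w=0}\xi(w)=-1$.

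For part (2) I would apply the linear change of variable $w=2m+2-4s$, which sends $s=\frac{m+1}{2}$ to $w=0$ with $dw=-4\,ds$. The residue transforms by the reciprocal of the Jacobian, giving
$$
\mathop{{\rm Res}}_{s=\frac{m+1}{2}}\xi(2m+2-4s)=\frac{\mathop{{\rm Res}}_{w=0}\xi(w)}{-4}=\frac{-1}{-4}=\tfrac14,
$$
which is (\ref{xiII}).

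For part (1) the strategy is to substitute $s=\frac{m+1}{2}$ directly into $\xi_{n,m}^{(\text{I})}(s)$ and show that every $\xi$-value that appears is finite and nonzero, so that the substitution is legitimate. The four kinds of factors specialize to $\xi\!\left(\frac{n-m}{2}\right)$, $\xi\!\left(\frac{n+m+2}{2}\right)$, $\xi(n-m-2j)$ with $j\neq\frac{n-m}{2}$, and $\xi(n+m+2-2j)$ for $1\le j\le [\frac{n}{2}]$. Using $\frac{n-m}{2}\in 2\mathbb{Z}_{>0}$ we obtain $n-m\in 4\mathbb{Z}_{>0}$, so $n$ and $m$ have the same parity and $\frac{n-m}{2}\ge 2$. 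Consequently $\frac{n-m}{2}$ and $\frac{n+m+2}{2}$ are integers $\ge 2$; the argument $n-m-2j$ is an even integer that is nonzero (precisely because $j\ne\frac{n-m}{2}$ is excluded) and hence avoids both $0$ and $1$; and $n+m+2-2j$ is a positive integer $\ge 2$ for $1\le j\le[\frac{n}{2}]$. All $\xi$-values are therefore in the holomorphic nonvanishing range, and $\xi_{n,m}^{(\text{I})}$ is holomorphic at $s=\frac{m+1}{2}$ with value (\ref{xiI}) by direct substitution.

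The argument is mostly bookkeeping once the two residues of $\xi$ are identified; the one conceptual step that needs to be checked is that $\xi_{n,m}^{(\text{II})}(s)=\xi(2m+2-4s)$ is exactly the $j=\frac{n-m}{2}$ term of the numerator product in $\xi_{n,m}(s)$, so it is the unique factor whose argument vanishes at $s=\frac{m+1}{2}$. Once this is verified, the decomposition (\ref{xidecomp}) neatly separates the pole from the regular part, and no hidden cancellation between numerator and denominator can occur.
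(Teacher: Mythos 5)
Your proposal is correct and follows essentially the same route as the paper: part (1) by direct substitution after checking that every argument of $\xi$ lands on an integer away from the poles at $0$ and $1$ (the paper splits the cases $j<\frac{n-m}{2}$ and $j>\frac{n-m}{2}$, you note the argument is a nonzero even integer, which is the same observation), and part (2) by locating the simple pole of $\xi(2m+2-4s)$ at $s=\frac{m+1}{2}$ and computing the residue $\frac14$ — your route via $\mathop{\rm Res}_{w=0}\xi(w)=-1$ and the factor $-4$ from the change of variable is the same calculation the paper performs by writing $\mathop{\rm Res}\,\Gamma(m+1-2s)\cdot\zeta(0)$. No gaps; your explicit check that $\xi_{n,m}^{(\rm II)}$ is the $j=\frac{n-m}{2}$ numerator factor, and that denominator factors are nonvanishing, is a welcome bit of extra care.
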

\begin{proof}
(1) Functions
$$
\tfrac{n+m+2}{2}-2s,\quad 2s+\tfrac{n-m}{2},\quad 4s+(n-m)-2j\;\;
\left(1\leq j\leq \left[\tfrac{n}{2}\right]\right)
$$
have integral values with $\geq 2$ at $s=\frac{m+1}{2}$. Hence, functions
$$
\frac{\xi\left(\frac{n+m+2}{2}-2s\right)}{\xi\left(2s+\frac{n-m}{2}\right)}\quad
\text{and}\quad
\prod_{j=1}^{[\frac{n}{2}]}\xi(4s+(n-m)-2j)
$$
are holomorphic at $s=\frac{m+1}{2}$. We consider factors
$$
\xi((n+m+2)-4s-2j)\quad \left( 1\leq j\leq\left[\tfrac{n}{2}\right],\;j\ne \tfrac{n-m}{2}\right).
$$
For $j$ with $1\leq j<\frac{n-m}{2}$, function $(n+m+2)-4s-2j$ has positive
even values with $\geq 2$ at $s=\frac{m+1}{2}$. Hence, $\xi((n+m+2)-4s-2j)$ is
holomorphic at $s=\frac{m+1}{2}$.\\
For $j$ with $\frac{n-m}{2}< j\leq\left[\frac{n}{2}\right]$, function
$(n+m+2)-4s-2j$ has a negative even value with $\leq -2$ at $s=\frac{m+1}{2}$.
Hence, $\xi((n+m+2)-4s-2j)$ is also holomorphic at $s=\frac{m+1}{2}$. 
Consequently, we have expression $\xi_{n,m}^{({\rm I})}\left(\tfrac{m+1}{2}\right)$
in (\ref{xiI}).
\\
(2)\; Because $\xi(s)$ has a simple pole at $s=0$, $\xi_{n,m}^{(\text{II})}(s)=\xi(2m+2-4s)$
has a simple pole at $s=\frac{m+1}{2}$, and the residue is
\begin{align*}
\mathop{{\rm Res}}_{s=\frac{m+1}{2}}
\xi_{n,m}^{(\text{II})}(s) &=
\mathop{{\rm Res}}_{s=\frac{m+1}{2}}\xi (2m+2-4s)\\
& =\mathop{{\rm Res}}_{s=\frac{m+1}{2}}\Gamma(m+1-2s)\cdot \zeta (0)=\tfrac{1}{4}.
\end{align*}
This proves (2).
\end{proof}
From this lemma, it is clear that $\xi_{n,m}(s)=\xi_{n,m}^{(\text{I})}(s)\cdot \xi_{n,m}^{(\text{II})}(s)$
has a simple pole at $s=\frac{m+1}{2}$ and
\begin{align*}
\mathop{{\rm Res}}_{s=\frac{m+1}{2}}
\xi_{n,m}(s) &=\xi_{n,m}^{(\text{I})}\left(\tfrac{m+1}{2}\right)
\mathop{{\rm Res}}_{s=\frac{m+1}{2}}
\xi_{n,m}^{(\text{II})}(s) \\
& =
\tfrac{1}{4}\cdot
\frac{\xi\left(\tfrac{n-m}{2}\right)}{\xi\left(\tfrac{n+m+2}{2}\right)}\,
        \frac{\prod_{\substack{1\leq j\leq [\frac{n}{2}] \\ j\ne\frac{n-m}{2}}}\xi(n-m-2j)}
             {\prod_{j=1}^{[\frac{n}{2}]}\xi(n+m+2-2j)}.
\end{align*}
This completes the proof of Proposition \ref{P2}.
\vspace{3mm}
\\
We return to the proof of Theorem \ref{Residue1}. We recall expression
\begin{align*}
E_{\frac{n-m}{2}}^{(n)}(Z,s)=& F_{n,m}(s)\cdot E_{\frac{n-m}{2}}^{(n)}\left(Z,\tfrac{m+1}{2}-s\right),\\
                                      & F_{n,m}(s)=\gamma_{n,m}(s)\cdot\xi_{n,m}(s).
\end{align*}
Because $\gamma_{n,m}(s)$ is holomorphic at $s=\frac{m+1}{2}$ (Proposition \ref{P1})
and $\xi_{n,m}(s)$ has a simple pole at  $s=\frac{m+1}{2}$ (Proposition \ref{P2}),
we obtain
\begin{align*}
\mathop{{\rm Res}}_{s=\frac{m+1}{2}}
E_{\frac{n-m}{2}}^{(n)}(Z,s) &=
\mathop{{\rm Res}}_{s=\frac{m+1}{2}}F_{n,m}(s)\cdot E_{\frac{n-m}{2}}^{(n)}(Z,0)\\
  &= \gamma_{n,m}\left(\tfrac{m+1}{2}\right)
   \mathop{{\rm Res}}_{s=\frac{m+1}{2}}\xi_{n,m}(s)\cdot E_{\frac{n-m}{2}}^{(n)}(Z,0).
\end{align*}
Therefore, if we set
\begin{align*}
c_{n,m}&:=\gamma_{n,m}\left(\tfrac{m+1}{2}\right)\cdot
   \mathop{{\rm Res}}_{s=\frac{m+1}{2}}\xi_{n,m}(s)\\
   &=(-1)^{[\frac{m+3}{2}]}\cdot\tfrac{1}{4}\cdot
      \frac{\Gamma_{\frac{n-m}{2}}\left(\tfrac{n-m}{2}\right)}{\Gamma_{\frac{n-m}{2}}\left(\tfrac{n+1}{2}\right)}
\prod_{j=0}^{\frac{n-m-4}{4}}\frac{\Gamma(\tfrac{1}{2}-j)}{\Gamma(-[\tfrac{m}{2}]-\tfrac{1}{2}-j)}
    \cdot \prod_{l=1}^{[\frac{m+1}{2}]}(j+l)
 \\
 & \cdot \frac{\xi\left(\tfrac{n-m}{2}\right)}{\xi\left(\tfrac{n+m+2}{2}\right)}\,
        \frac{\prod_{\substack{1\leq j\leq [\frac{n}{2}] \\ j\ne\frac{n-m}{2}}}\xi(n-m-2j)}
             {\prod_{j=1}^{[\frac{n}{2}]}\xi(n+m+2-2j)},
\end{align*}
then
$$
\label{Res}
\mathop{{\rm Res}}_{s=\frac{m+1}{2}}E_{\frac{n-m}{2}}^{(n)}(Z,s)=c_{n,m}\,E_{\frac{n-m}{2}}^{(n)}(Z,0),
$$
and this proves Theorem \ref{Residue1}.
\begin{Rem}
It is noteworthy that
Weissauer proved the statement in our Theorem \ref{Residue1} using a different method
(\cite[p.131, (175)]{Weis}). That is, he showed that 
$\mathop{{\rm Res}}_{s=\frac{m+1}{2}}E_{\frac{n-m}{2}}^{(n)}(Z,s)$ is a constant multiple
of $E_{\frac{n-m}{2}}^{(n)}(Z,0)$.
\end{Rem}

\subsection{Refinement of Shimura's result}
{\rm
We prove the following theorem, which is a refinement of Shimura's result
(Theorem \ref{ShimuraMain}).}
\begin{Thm}
\label{Residue2}{\it 
Assume that $\frac{n-1}{2} \equiv 0 \pmod{2}$. Then, we have
$$
\mathop{{\rm Res}}_{s=1}E_{\frac{n-1}{2}}^{(n)}(Z,s)=\pi^{-n}\cdot c_{n}\,E_{\frac{n-1}{2}}^{(n)}(Z,0),
$$
where constant $c_n$ is expressed as
\begin{equation}
\label{cn}
c_n=
 (-1)^{\frac{n-1}{4}}2^{-2n-2}\cdot 3^{-1}
    \frac{\left(\tfrac{n+3}{2}\right)!(n+1)!}
           {\left(\tfrac{n-1}{2}\right)!} 
     \frac{B_{\frac{n-1}{2}}} {B_{\frac{n+3}{2}}B_{n+1}B_{n-1}}.
\end{equation}
Here, $B_m$ is the $m$-th Bernoulli number.
}
\end{Thm}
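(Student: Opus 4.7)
The statement is the specialization $m=1$ of Theorem \ref{Residue1} combined with an explicit evaluation of $c_{n,1}$. Under the hypothesis $\frac{n-1}{2}\in 2\mathbb{Z}_{>0}$ (equivalently $n\equiv 1\pmod 4$), the condition $\frac{n-m}{2}\in 2\mathbb{Z}_{>0}$ of Theorem \ref{Residue1} is met with $m=1$, and the pole location $\frac{m+1}{2}$ becomes $1$. Hence the existence of the simple pole and the formula $\mathop{{\rm Res}}_{s=1}E_{(n-1)/2}^{(n)}(Z,s)=c_{n,1}\,E_{(n-1)/2}^{(n)}(Z,0)$ follow at once from Theorem \ref{Residue1}, and the plan is to verify the numerical identity $c_{n,1}=\pi^{-n}c_n$ by simplifying formula \eqref{explicitcnm}.

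I would simplify the $\gamma$-part first. The telescoping identity $\Gamma_N(s+1)/\Gamma_N(s)=\prod_{j=0}^{N-1}(s-j/2)$ evaluates $\Gamma_{(n-1)/2}(\tfrac{n-1}{2})/\Gamma_{(n-1)/2}(\tfrac{n+1}{2})$ to $2^{(n-1)/2}(\tfrac{n-1}{2})!/(n-1)!$, while each factor $\Gamma(\tfrac12-j)/\Gamma(-\tfrac12-j)$ reduces via $\Gamma(z+1)=z\Gamma(z)$ to $-(2j+1)/2$. Taking the product over $j=0,\ldots,\tfrac{n-5}{4}$ and applying $\prod_{j=0}^{K-1}(2j+1)=(2K)!/(2^{K}K!)$ with $K=\tfrac{n-1}{4}$ yields the closed form
$$\gamma_{n,1}(1)=(-1)^{(n-1)/4}\cdot\frac{\bigl((\tfrac{n-1}{2})!\bigr)^{2}}{(n-1)!}.$$

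For the $\xi$-part, with $[n/2]=\tfrac{n-1}{2}$, the numerator product $\xi(n-3)\xi(n-5)\cdots\xi(2)$ and denominator product $\xi(n+1)\xi(n-1)\xi(n-3)\cdots\xi(4)$ share the common factors $\xi(n-3)\xi(n-5)\cdots\xi(4)$; after cancellation, the $\xi$-contribution to $c_{n,1}$ collapses to $\tfrac14\cdot\xi(\tfrac{n-1}{2})\xi(2)/\bigl(\xi(\tfrac{n+3}{2})\xi(n+1)\xi(n-1)\bigr)$. Since $n\equiv 1\pmod 4$, all five arguments are positive even integers, and I would apply Euler's formula
$$\xi(2k)=\frac{(-1)^{k+1}\,2^{2k-1}(k-1)!\,\pi^{k}}{(2k)!}\,B_{2k}$$
to every factor, converting the quotient into an expression in $\pi$-powers, factorials and the Bernoulli numbers $B_{(n-1)/2},B_{(n+3)/2},B_{n+1},B_{n-1}$.

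The main obstacle is the final bookkeeping. I must verify (i) that the five $\pi$-exponents combine to exactly $\pi^{-n}$ (indeed $(\tfrac{n-1}{4}+1)-(\tfrac{n+3}{4}+\tfrac{n+1}{2}+\tfrac{n-1}{2})=-n$); (ii) that the signs $(-1)^{k+1}$ contributed by the $\xi$-factors cancel within the quotient, so that the overall sign of $c_{n,1}$ is precisely the $(-1)^{(n-1)/4}$ supplied by $\gamma_{n,1}(1)$; and (iii) that the various factorials and powers of $2$ collapse to the compact expression $2^{-2n-2}\cdot 3^{-1}\cdot(\tfrac{n+3}{2})!(n+1)!/(\tfrac{n-1}{2})!$ appearing in \eqref{cn}, where the factor $3^{-1}$ originates from $B_{2}=\tfrac16$. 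A consistency check on the smallest case $n=5$, where each $\xi$-value is elementary, would guard against sign or normalization errors before carrying out the general simplification.
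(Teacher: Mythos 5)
Your proposal is correct and follows essentially the same route as the paper: specializing the functional-equation argument to $m=1$, evaluating the $\gamma$-factor and the $\xi$-factor at $s=1$ separately, and converting the surviving $\xi$-values $\xi(\tfrac{n-1}{2}),\xi(2),\xi(\tfrac{n+3}{2}),\xi(n+1),\xi(n-1)$ via Euler's formula (your $\pi$-exponent and sign checks, and the $n=5$ test, all verify). Your closed form $\gamma_{n,1}(1)=(-1)^{(n-1)/4}\bigl((\tfrac{n-1}{2})!\bigr)^2/(n-1)!$ agrees with the paper's double-factorial expression $(-1)^{\frac{n-1}{4}}2^{\frac{n-1}{4}}\bigl(\tfrac{n-1}{2}\bigr)!\bigl(\tfrac{n-1}{4}\bigr)!\bigl(\tfrac{n-3}{2}\bigr)!!/(n-1)!$ by the identity $(2t)!=2^{t}t!\,(2t-1)!!$, the only caveat being that your account of $\gamma_{n,1}^{(\mathrm{II})}$ should also record the factors $\lim_{t\to0}\Gamma(t-j)/\Gamma(-t-1-j)=j+1$ coming from the odd-indexed Gamma quotients, which supply the extra $\bigl(\tfrac{n-1}{4}\bigr)!$ implicit in your stated value.
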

\begin{Rem}
The theorem above is considered a special case ($m=1$) of Theorem \ref{Residue1}.
The theorem asserts that constant $c_{n,1}$ in Theorem \ref{Residue1} can be expressed as
$c_{n,1}=\pi^{-n}\cdot c_n$ with $c_n\in\mathbb{Q}^{\times}$.
\end{Rem}
We use the notation in the previous section as $m=1$.\\
We recall the functional equation (\ref{FEQ}), 
$$
E_{\frac{n-1}{2}}^{(n)}(Z,s)=F_{n,1}(s)\cdot E_{\frac{n-1}{2}}^{(n)}(Z,1-s).
$$
Because $E_{\frac{n-1}{2}}^{(n)}(Z,s)$ is holomorphic at $s=0$, it is sufficient to prove
that
$$
\mathop{{\rm Res}}_{s=1}F_{n,1}(s)=\pi^{-n}\cdot c_n.
$$
We recall the definition of $F_{n,1}(s)$:
\begin{align*}
F_{n,1}(s) &= \gamma_{n,1}(s)\cdot\xi_{n,1}(s),\\
   & \gamma_{n,1}(s)=
   \frac{\Gamma_n\left(\frac{n+1}{2}-s\right)}{\Gamma_n\left( \frac{n+3}{4}-s\right)}\,
   \frac{\Gamma_n\left(s+\frac{n-1}{4}\right)}{\Gamma_n\left( s+\frac{n-1}{2} \right)},\\
   & \xi_{n,1}(s)=
   \frac{\xi\left(\frac{n+3}{2}-2s \right)}{\xi\left(2s+\frac{n-1}{2}\right)}
   \prod_{j=1}^{\frac{n-1}{2}}
   \frac{\xi(n+3-4s-2j)}{\xi(4s+n-1-2j)}.
\end{align*}
%
\subsubsection{Analysis of $\boldsymbol{\gamma}$-part}
\begin{Prop}
\label{PP1}
$$
\lim_{s\to 1}\gamma_{n,1}(s)=
(-1)^{\frac{n-1}{4}}2^{\frac{n-1}{4}}
                               \frac{\left(\tfrac{n-1}{2}\right)!\left(\tfrac{n-1}{4}\right)!}{(n-1)!}
                               \left(\tfrac{n-3}{2}\right)!!\,.
$$
\end{Prop}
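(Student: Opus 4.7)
The plan is to invoke the explicit evaluation \eqref{gammanm} from Proposition \ref{P1} at $m=1$ and then simplify. Setting $k:=(n-1)/4$ (a non-negative integer by hypothesis) and noting $(-1)^{[(m+3)/2]}=1$ when $m=1$, the specialization reduces the claim to the identity
$$\gamma_{n,1}(1)=\frac{\Gamma_{2k}(2k)}{\Gamma_{2k}(2k+1)}\prod_{j=0}^{k-1}\frac{\Gamma(1/2-j)}{\Gamma(-1/2-j)}\cdot(j+1).$$

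The two factors can be handled separately. For the Siegel-Gamma ratio, applying $\Gamma(z+1)/\Gamma(z)=z$ to each factor in the product definition of $\Gamma_{2k}$ reduces $\Gamma_{2k}(2k+1)/\Gamma_{2k}(2k)$ to $\prod_{j=0}^{2k-1}(2k-j/2)$; collecting powers of $2$ and the resulting falling factorial yields $\Gamma_{2k}(2k)/\Gamma_{2k}(2k+1)=2^{2k}(2k)!/(4k)!$.

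For the finite product over $j$, the identity $\Gamma(z+1)=z\Gamma(z)$ at $z=-1/2-j$ gives $\Gamma(1/2-j)/\Gamma(-1/2-j)=-(j+1/2)$, and $(1+j)!/j!=j+1$. Taking the product and separating the integer from the half-integer part produces $(-1)^k\cdot k!\cdot(2k-1)!!/2^k$.

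Combining the two contributions yields
$$\gamma_{n,1}(1)=(-1)^k\cdot 2^k\cdot\frac{(2k)!\,k!\,(2k-1)!!}{(4k)!},$$
and rewriting in terms of $n$ via $k=(n-1)/4$, $2k=(n-1)/2$, $2k-1=(n-3)/2$, $4k=n-1$ reproduces the claim. The calculation is essentially bookkeeping; the only concern is careful tracking of signs and powers of $2$, with no conceptual obstacle.
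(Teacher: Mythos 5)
Your proof is correct and takes essentially the same route as the paper: both specialize the decomposition and evaluation carried out in Proposition \ref{P1} to $m=1$ (where the sign prefactor is $1$) and then simplify the two resulting factors. The only cosmetic difference is that you reduce $\Gamma_{2k}(2k)/\Gamma_{2k}(2k+1)$ term by term via $\Gamma(z+1)=z\Gamma(z)$, whereas the paper telescopes and invokes the duplication formula; both yield $2^{2k}(2k)!/(4k)!$ and hence the same value $(-1)^{k}2^{k}(2k)!\,k!\,(2k-1)!!/(4k)!$.
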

\begin{proof}
Similar to (\ref{gamma}), we express $\gamma_{n,1}(s)$ as
\begin{align*}
\gamma_{n,1}(s) =& \gamma_{n,1}^{(\text{I})}(s)\gamma_{n,1}^{(\text{II})}(s)\\
 & \gamma_{n,1}^{(\text{I})}(s)=\frac{\Gamma_{\frac{n-1}{2}}\left( \frac{n+1}{2}-s\right)}
                                           {\Gamma_{\frac{n-1}{2}}\left( s+\frac{n-1}{2}\right)},\quad
\gamma_{n,1}^{(\text{II})}(s)=\frac{\Gamma_{\frac{n-1}{2}}\left( s-\frac{1}{2}\right)}
                                           {\Gamma_{\frac{n-1}{2}}\left( \frac{1}{2}-s\right)}.
\end{align*}
\textbf{Calculation of} $\boldsymbol{\gamma_{n,1}^{(\text{I})}(s)}$:
\vspace{1mm}
\\
Because $\gamma_{n,1}^{(\text{I})}(s)$ is holomorphic at $s=1$, 
from (\ref{gammanmI}), we have
\begin{align}
\gamma_{n,1}^{(\text{I})}(1) &= \frac{\Gamma_{\frac{n-1}{2}}\left(\frac{n-1}{2}\right)}
                                              {\Gamma_{\frac{n-1}{2}}\left(\frac{n+1}{2}\right)}
                                    =\prod_{j=0}^{\frac{n-5}{4}}\frac{\Gamma\left(\frac{n-1}{2}-j \right)}
                                            {\Gamma\left(\frac{n+1}{2}-j\right)}\cdot
                                     \prod_{j=0}^{\frac{n-5}{4}}\frac{\Gamma\left(\frac{n-2}{2}-j \right)}
                                            {\Gamma\left(\frac{n}{2}-j\right)}   \nonumber \\
                                  &=\frac{\Gamma\left(\frac{n+3}{4} \right)}{\Gamma\left(\frac{n+1}{2 }\right)}
                                    \cdot \frac{\Gamma\left(\frac{n+1}{4} \right)}{\Gamma\left(\frac{n}{2}\right)}
                                    =2^{\frac{n-1}{2}}\frac{\Gamma\left(\frac{n+1}{2}\right)}{\Gamma(n)}
      = 2^{\frac{n-1}{2}}\frac{\left(\tfrac{n-1}{2}\right)!}{(n-1)!}. \label{gamman1I}
\end{align}
\textbf{Calculation of} $\boldsymbol{\gamma_{n,1}^{(\text{II})}(s)}$:
\vspace{1mm}
\\
We set $m=1$ in (\ref{gammanmII}). Subsequently, we have
\begin{align*}
\lim_{s\to 1}\gamma_{n,1}^{(\text{II})}(s) & =
\prod_{j=0}^{\frac{n-5}{4}}\frac{\Gamma(\tfrac{1}{2}-j)}{\Gamma(-\tfrac{1}{2}-j)}
        \cdot
\left(\lim_{t\to 0}\frac{\Gamma(t-j)}{\Gamma(-t-1-j)}\right) \\
& =
\frac{\Gamma(\tfrac{1}{2})}{\Gamma\left(-\tfrac{n-3}{4}\right)}
\cdot \left(\tfrac{n-1}{4}\right)!.
\end{align*}
Simple calculation shows
$$                                           
                                            \frac{\Gamma(\tfrac{1}{2})}{\Gamma\left(-\tfrac{n-3}{4}\right)}
                                            =\prod_{j=0}^{\frac{n-5}{4}}\left(-\tfrac{1}{2}-j \right)
                                            =(-1)^{\frac{n-1}{4}}\,2^{-\frac{n-1}{4}}\left(\tfrac{n-3}{2}\right)!!.
$$                        
Therefore, we obtain
\begin{equation}
\label{gamman1II}
\lim_{s\to 1}\gamma_{n,1}^{(\text{II})}(s)=(-1)^{\frac{n-1}{4}}2^{-\frac{n-1}{4}}\left(\tfrac{n-1}{4}\right)!
                                                \left(\tfrac{n-3}{2}  \right)!! .
\end{equation}
Combining (\ref{gamman1I}) and (\ref{gamman1II}), we obtain
\begin{align}
\label{gammafinal}
\lim_{s\to 1}\gamma_{n,1}(s) &=\gamma_{n,1}^{(\text{I})}(1)\cdot \lim_{s\to 1}\gamma_{n,1}^{(\text{II})}(s)
                                              \nonumber \\
                               &=(-1)^{\frac{n-1}{4}}2^{\frac{n-1}{4}}
                               \frac{\left(\tfrac{n-1}{2}\right)!\left(\tfrac{n-1}{4}\right)!}{(n-1)!}
                               \left(\tfrac{n-3}{2}\right)!!\,.
\end{align}
\end{proof}
\subsubsection{Analysis of $\boldsymbol{\xi}$-part}
\begin{Prop}\label{PP2}
\begin{align*}
& \mathop{{\rm Res}}_{s=1}\,\xi_{n,1}(s)\\
& \qquad =
\pi^{-n}\cdot 2^{-3-2n}\cdot 3^{-1}
\frac{\left(\tfrac{n-5}{4}\right)!\left(\tfrac{n+3}{2}\right)!(n+1)!(n-1)!}
        {\left(\tfrac{n-1}{4}\right)!\left(\tfrac{n-3}{2}\right)!
        \left\{\left(\tfrac{n-1}{2}\right)!\right\}^2}
\frac{B_{\frac{n-1}{2}}}{B_{\frac{n+3}{2}}B_{n+1}B_{n-1}}. 
\end{align*}
\end{Prop}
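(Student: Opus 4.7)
The plan is to apply Proposition \ref{P2} specialized to $m=1$, which gives
\begin{equation*}
\mathop{{\rm Res}}_{s=1}\xi_{n,1}(s)=\tfrac{1}{4}\cdot\frac{\xi(\tfrac{n-1}{2})}{\xi(\tfrac{n+3}{2})}\cdot\frac{\prod_{j=1}^{(n-3)/2}\xi(n-1-2j)}{\prod_{j=1}^{(n-1)/2}\xi(n+3-2j)},
\end{equation*}
and then to evaluate every $\xi$-value in closed form. The hypothesis $\tfrac{n-1}{2}\equiv 0\pmod{2}$ forces $n\equiv 1\pmod{4}$, so every argument $\tfrac{n-1}{2},\,\tfrac{n+3}{2},\,n-1-2j,\,n+3-2j$ is a positive even integer, and one substitutes Euler's identity in the form
\begin{equation*}
\xi(2k)=(-1)^{k+1}\,2^{2k-1}\pi^{k}\,\frac{(k-1)!}{(2k)!}\,B_{2k}
\end{equation*}
into each factor.

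The resulting product splits into pieces to be tracked separately. The Bernoulli factors telescope: the numerator contribution $B_{\frac{n-1}{2}}\cdot B_2B_4\cdots B_{n-3}$ and the denominator contribution $B_{\frac{n+3}{2}}\cdot B_4B_6\cdots B_{n+1}$ share the block $B_4\cdots B_{n-3}$, leaving
\begin{equation*}
\frac{B_2\,B_{\frac{n-1}{2}}}{B_{\frac{n+3}{2}}\,B_{n-1}\,B_{n+1}},
\end{equation*}
and $B_2=\tfrac{1}{6}$ supplies the isolated $3^{-1}$ together with a $2^{-1}$. For the power of $\pi$, summing the contributions at $k=\tfrac{n-1}{4}$ and $k=1,\dots,\tfrac{n-3}{2}$ in the numerator against $k=\tfrac{n+3}{4}$ and $k=2,\dots,\tfrac{n+1}{2}$ in the denominator yields a net exponent of $-n$. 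The factorial ratios $\tfrac{(k-1)!}{(2k)!}$ cancel over the common range $k=2,\dots,\tfrac{n-3}{2}$; the only leftovers are the $k=1$ term $\tfrac{0!}{2!}=\tfrac{1}{2}$ from the numerator and the boundary terms at $k=\tfrac{n-1}{2},\tfrac{n+1}{2}$ from the denominator. When these are combined with the outer factorials coming from $\xi(\tfrac{n-1}{2})/\xi(\tfrac{n+3}{2})$, one recovers exactly the factorial quotient displayed in the conclusion.

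The main obstacle is the quadratic bookkeeping of the exponent of $2$. The sums $\sum(2k-1)$ in the numerator and denominator both grow like $N^2$, and the target exponent $-3-2n$ emerges only after four independent contributions are combined: the leading $\tfrac{1}{4}$ from Proposition \ref{P2}, the $\tfrac{1}{2}$ from $B_2=\tfrac{1}{6}$, the $\tfrac{1}{2}$ coming from the $k=1$ factorial, and the net $2^{-2n+1}$ arising from the telescoping among the $2^{2k-1}$'s. A parallel, but easier, task is the sign check: each $\xi(2k)$ introduces an independent $(-1)^{k+1}$ and each $B_{2k}$ carries its intrinsic sign $(-1)^{k+1}$, so after isolating signs temporarily (e.g.\ by writing $B_{2k}=(-1)^{k+1}|B_{2k}|$) one must verify that all contributions multiply to $+1$, as required by the sign-free form of the statement.
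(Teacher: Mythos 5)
Your proposal is correct and follows essentially the same route as the paper: both extract the pole through the factor $\xi(2m+2-4s)\big|_{m=1}=\xi(4-4s)$ with residue $\tfrac14$ (your appeal to Proposition \ref{P2} at $m=1$ is exactly the paper's decomposition $\xi_{n,1}=\xi_{n,1}^{(\mathrm{I})}\xi_{n,1}^{(\mathrm{II})}$), and then evaluate the remaining holomorphic factor at $s=1$ via Euler's formula for $\xi$ at even integers; your bookkeeping of the powers of $2$ and $\pi$, the surviving Bernoulli numbers $B_2,B_{\frac{n-1}{2}},B_{\frac{n+3}{2}},B_{n-1},B_{n+1}$, the factorials, and the overall sign all check out. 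The only organizational difference is that the paper first telescopes the $\xi$-products as whole values, $\prod_{j=1}^{(n-3)/2}\xi(n-1-2j)\big/\prod_{j=1}^{(n-1)/2}\xi(n+3-2j)=\xi(2)/(\xi(n+1)\xi(n-1))$, so that only four $\xi$-values need converting to Bernoulli form, which sidesteps the quadratic exponent sums you identify as the main obstacle.
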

\begin{proof}
We recall the decomposition of $\xi_{n,m}(s)$ in (\ref{xidecomp}).
If we apply this decomposition to $\xi_{n,1}(s)$, we obtain the following expression:
\begin{align*}
\xi_{n,1}(s) =& \xi_{n,1}^{(\text{I})}(s)\cdot\xi_{n,1}^{(\text{II})}(s)\\
      & \xi_{n,1}^{(\text{I})}(s)=
      \frac{\xi\left(\frac{n+3}{2}-2s \right)}{\xi\left(2s+\frac{n-1}{2} \right)}
\frac{\prod_{j=1}^{\frac{n-3}{2}}\xi(n+3-4s-2j)}{\prod_{j=1}^{\frac{n-1}{2}}\xi(4s+n-1-2j)},\\
&  \xi_{n,1}^{(\text{II})}(s)=\xi (4-4s).
\end{align*}
First, we calculate the value of $\xi_{n,1}^{(\text{I})}(1)$. (The holomorphy
of $\xi_{n,1}^{(\text{I})}(s)$ at $s=1$ is guaranteed by Lemma \ref{xinm}, (1).)
\begin{align}
\label{rho}
\xi_{n,1}^{(\text{I})}(1)  & = 
\frac{\xi\left(\frac{n-1}{2}\right)}{\xi\left(\frac{n+3}{2} \right)}\cdot
\frac{\prod_{j=1}^{\frac{n-3}{2}}\xi(n-1-2j)}{\prod_{j=1}^{\frac{n-1}{2}}\xi(n+3-2j)} \nonumber \\
   &= \frac{\xi\left(\frac{n-1}{2}\right)}{\xi\left(\frac{n+3}{2} \right)}\cdot
         \frac{\xi (2)}{\xi (n+1)\,\xi(n-1)}.
\end{align}
We rewrite the factors appearing in the last formula using the Bernoulli numbers as follows:
$$
\frac{\xi\left(\frac{n-1}{2}\right)}{\xi\left(\frac{n+3}{2} \right)}
=
-2^{-2}\pi^{-1}\frac{\left(\tfrac{n-5}{4}\right)!\left(\tfrac{n+3}{2}\right)!}
                             {\left(\tfrac{n-1}{4}\right)!\left(\tfrac{n-1}{2}\right)!}
                    \, \frac{B_{\frac{n-1}{2}}}{B_{\frac{n+3}{2}}}
$$
and
$$
 \frac{\xi (2)}{\xi (n+1)\,\xi(n-1)}=
-\pi^{1-n}\cdot 2^{1-2n}\cdot 3^{-1}
\frac{(n+1)!(n-1)!}{\left(\tfrac{n-1}{2}\right)!\left(\tfrac{n-3}{2}\right)!}\,
\frac{1}{B_{n+1}B_{n-1}}.
$$
Consequently,
\begin{align*}
& \xi_{n,1}^{(\text{I})}(1)  \nonumber \\
& =
\pi^{-n}\cdot 2^{-1-2n}\cdot 3^{-1}
\frac{\left(\tfrac{n-5}{4}\right)!\left(\tfrac{n+3}{2}\right)!(n+1)!(n-1)!}
        {\left(\tfrac{n-1}{4}\right)!\left(\tfrac{n-3}{2}\right)!
        \left\{\left(\tfrac{n-1}{2}\right)!\right\}^2}
\frac{B_{\frac{n-1}{2}}}{B_{\frac{n+3}{2}}B_{n+1}B_{n-1}}.
\end{align*}
Next, we consider factor $\xi_{n,1}^{(\text{II})}(s)=\xi(4-4s)$.
This function has a simple pole at $s=1$, and the residue is provided
in (\ref{xiII}). That is,
$$
\mathop{{\rm Res}}_{s=1}\xi_{n,1}^{(\text{II})}(s)=\mathop{{\rm Res}}_{s=1}\xi(4-4s)=\tfrac{1}{4}.
$$
Hence, we obtain
\begin{align}
\label{xifinal}
&\mathop{{\rm Res}}_{s=1}\,\xi_{n,1}(s) = \xi_{n,1}^{(\text{I})}(1) \cdot 
                                                     \mathop{{\rm Res}}_{s=1}\xi_{n,1}^{(\text{II})}(s)
               \nonumber \\
&=\pi^{-n}\cdot 2^{-3-2n}\cdot 3^{-1}
\frac{\left(\tfrac{n-5}{4}\right)!\left(\tfrac{n+3}{2}\right)!(n+1)!(n-1)!}
        {\left(\tfrac{n-1}{4}\right)!\left(\tfrac{n-3}{2}\right)!
        \left\{\left(\tfrac{n-1}{2}\right)!\right\}^2}
\frac{B_{\frac{n-1}{2}}}{B_{\frac{n+3}{2}}B_{n+1}B_{n-1}}.                                   
\end{align}
This completes the proof of Proposition \ref{PP2}.
\end{proof}
Summarizing (\ref{gammafinal}) and (\ref{xifinal}), we conclude
that
\begin{align*}
& \mathop{{\rm Res}}_{s=1}\,F_{n,1}(s)=
\lim_{s\to 1}\gamma_{n,1}(s)\cdot \mathop{{\rm Res}}_{s=1}\,\xi_{n,1}(s)\\
& \qquad =\pi^{-n}\cdot
     (-1)^{\frac{n-1}{4}}2^{-\frac{7n+17}{4}}\cdot 3^{-1}
    \frac{\left(\tfrac{n+3}{2}\right)!(n+1)!}
           {\left(\tfrac{n-1}{2}\right)!} 
     \frac{B_{\frac{n-1}{2}}} {B_{\frac{n+3}{2}}B_{n+1}B_{n-1}}.
\end{align*}
We used identities
$$
\frac{\left(\frac{n-3}{2}\right)!!}{\left(\frac{n-3}{2}\right)!}=\frac{1}{\left(\frac{n-5}{2}\right)!!},\quad
\frac{\left(\frac{n-5}{4}\right)!}{\left(\frac{n-5}{2}\right)!!}=2^{\frac{5-n}{4}}
$$ 
in the above calculation (note that $n \equiv 1 \pmod{4}$). 
This completes the proof of Theorem \ref{Residue2}. We obtained
$$
\mathop{{\rm Res}}_{s=1}E_{\frac{n-1}{2}}^{(n)}(Z,s)=\pi^{-n}\cdot c_{n}\,E_{\frac{n-1}{2}}^{(n)}(Z,0)
$$
with
$$
c_n=
 (-1)^{\frac{n-1}{4}}2^{-2n-2}\cdot 3^{-1}
    \frac{\left(\tfrac{n+3}{2}\right)!(n+1)!}
           {\left(\tfrac{n-1}{2}\right)!} 
     \frac{B_{\frac{n-1}{2}}} {B_{\frac{n+3}{2}}B_{n+1}B_{n-1}}.
$$
This completes the proof of Theorem \ref{Residue2}.
\section{Hermtian Eisenstein series}
\label{hermitian}
In this section, we treat a case of the Hermitian Eisenstein series
$E_{k,\boldsymbol{K}}^{(n)}(Z,s)$ (for the precise definition,
see $\S$ \ref{HerEis}), and provide results analogous to those of the Siegel Eisenstein case.
The objective is to refine Shimura's result (Theorem \ref{ShimuraMain}) for
Hermitian Eisenstein series.
\subsection{Hermitian modular forms}
Let $\mathcal{H}_n$ be the Hermitian upper half space of degree $n$
defined by
$$
\mathcal{H}_n=\{\,Z\in M_n(\mathbb{C})\,|\,I(Z):=\tfrac{1}{2\sqrt{-1}}(Z-{}^t\overline{Z})>0\,\}.
$$
The special unitary group $SU(n,n)$ is realized by
$$
G_n:=\{\,M\in SL_{2n}(\mathbb{C})\,|\,{}^t\overline{M}J_nM=J_n\,\},
$$
where $J_n={\scriptsize \begin{pmatrix}0_n & E_n \\ -E_n & 0_n \end{pmatrix}}$.
The group $G_n$ acts on $\mathcal{H}_n$ by generalized
linear fractional transformations.

Let $\boldsymbol{K}$ be an imaginary quadratic number field with discriminant
$-D_{\boldsymbol{K}}$. We denote by $\mathcal{O}_{\boldsymbol{K}}$ and
$\mathfrak{d}_{\boldsymbol{K}}$ the ring of integers in $\boldsymbol{K}$ and the
different ideal of $\boldsymbol{K}$, respectively. Let $\chi_{\boldsymbol{K}}$ be the
Kronecker character of $\boldsymbol{K}$ and $h_{\boldsymbol{K}}$ the
class number of $\boldsymbol{K}$. We define the {\it Hermitian modular
group} of degree $n$ for $\boldsymbol{K}$ by
$$
\varGamma_{n,\boldsymbol{K}}=G_n\cap M_{2n}(\mathcal{O}_{\boldsymbol{K}}).
$$
We denote by $M_k(\varGamma_{n,\boldsymbol{K}})$ the $\mathbb{C}$-vector
space of Hermitian modular forms of weight $k$ for $\varGamma_{n,\boldsymbol{K}}$.

It is known that each $F\in M_k(\varGamma_{n,\boldsymbol{K}})$ admits a
Fourier expansion of the form
$$
F(Z)=\sum_{0\leq H\in \Lambda_n(\boldsymbol{K})}a_F(H)
\text{exp}(2\pi\sqrt{-1}\text{tr}(HZ)),
$$
where 
$$
\Lambda_n(\boldsymbol{K}):=\{\, H=(h_{ij})\in \text{Her}(\boldsymbol{K})\,|\,
h_{ii}\in\mathbb{Z},\,h_{ij}\in\mathfrak{d}_{\boldsymbol{K}}^{-1}\,\}.
$$
As in the Siegel modular case, we define $M_k(\varGamma_{n,\boldsymbol{K}})_R$
for a subring, $R\subset\mathbb{C}$.
\subsubsection{Hermitian Eisenstein series}
\label{HerEis}
We define a parabolic subgroup of $G_n$ as follows:
$$
P_n:=\left\{\,{\scriptsize \begin{pmatrix} A & B \\ C & D \end{pmatrix}}\in G_n\,\Big{|}\;\;
                 C=0_n\,\right\}.
$$
The Eisenstein series considered in this section is
$$
E_{k,\boldsymbol{K}}^{(n)}(Z,s):=
\text{det}(I(Z))^s\!\!\sum_{\binom{*\,*}{C\,D}\in(P_n\cap \Gamma_{n,\boldsymbol{K}})\backslash
\Gamma_{n,\boldsymbol{K}}}
\!\!\text{det}(CZ+D)^{-k}|\text{det}(CZ+D)|^{-2s},
$$
where $(Z,s)\in \mathcal{H}_n\times\mathbb{C}$, $k\in 2\mathbb{Z}_{\geq 0}$.
It is known that this series is absolutely, uniformly convergent if $\text{Re}(s)+k>2n$.
Therefore, $E_{k,\boldsymbol{K}}^{(n)}(Z):=E_{k,\boldsymbol{K}}^{(n)}(Z,0)$ becomes
an element of $M_k(\varGamma_{n,\boldsymbol{K}})$ if $k>2n$. Moreover, it has rational
Fourier coefficients (i.e., $E_{k,\boldsymbol{K}}^{(n)}(Z)\in M_k(\varGamma_{n,\boldsymbol{K}})_{\mathbb{Q}}$).

We refer to $E_{k,\boldsymbol{K}}^{(n)}(Z,s)$ as the {\it Hermitian Eisenstein series}
of degree $n$. \\
Next, we study the analytic property of the Hermitian Eisenstein series.
\subsubsection{Functional equation of Hermitian Eisenstein series}
\label{FCHermitian}
In the remainder of this section ($\S$ \ref{hermitian}), we apply the assumption that
$$
h_{\boldsymbol{K}}=1.
$$
For $n\in \mathbb{Z}_{>0}$ and $k\in 2\mathbb{Z}_{\geq 0}$, we define
function $g_{n,k,\boldsymbol{K}}(s)$ by
\begin{equation}
\label{defg}
g_{n,k,\boldsymbol{K}}(s)
=\frac{\Gamma_{n,\mathbb{C}}(\frac{s+k}{2})}{\Gamma_{n,\mathbb{C}}(\frac{s}{2})}
\prod_{j=0}^{n-1}\xi(s-j;\chi_{\boldsymbol{K}}^j),
\end{equation}
where
\begin{align*}
& \Gamma_{n,\mathbb{C}}(s)=\pi^{\frac{n(n-1)}{2}}\prod_{j=0}^{n-1}\Gamma(s-j),\\
& \xi(s;\chi_{\boldsymbol{K}}^j)
    =\begin{cases}
    \pi^{-\frac{s}{2}}\Gamma(\tfrac{s}{2})\zeta(s) & \text{if $j$\, is\, even},
    \vspace{2mm}
    \\
    D_{\boldsymbol{K}}^{\frac{s}{2}}\,\pi^{-\frac{s}{2}}\Gamma(\tfrac{s+1}{2})L(s;\chi_{\boldsymbol{K}})
    & \text{if $j$\, is\, odd},
    \end{cases}
\end{align*}
and $L(s;\chi)$ is the Dirichlet $L$-function.
Under the aforementioned assumption, the Hermitian Eisenstein has the functional equation
of the form
\begin{equation}
\label{FCH}
E_{k,\boldsymbol{K}}^{(n)}(Z,s)
=\frac{g_{n,k,\boldsymbol{K}}(2n-k-2s)}{g_{n,k,\boldsymbol{K}}(2s+k)}
E_{k,\boldsymbol{K}}^{(n)}(Z,n-k-s)
\end{equation}
(e.g., cf. \cite{N}, Theorem 1.8).
\subsubsection{Shimura's result in Hermitian case}
Shimura's result for the Hermitian Eisenstein series is as
follows:
\begin{Thm} (Shimura \cite{Sh}, {\scshape Proposition 10.3})
\label{ShimuraMainH}{\it 
The Eisenstein series\\
 $E_{n-1,\boldsymbol{K}}^{(n)}(Z,s)$ has at most a simple pole
at $s=1$. The residue is written as
$$
\mathop{{\rm Res}}_{s=1}E_{n-1,\boldsymbol{K}}^{(n)}(Z,s)=\pi^{-n}\cdot f
$$
with some $f\in M_{n-1}(\varGamma_{n,\boldsymbol{K}})_{\mathbb{Q}}$.}
\end{Thm}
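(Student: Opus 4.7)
The plan is to mirror the argument used for Theorem \ref{Residue1} in the Siegel setting, now applied to the Hermitian functional equation (\ref{FCH}) specialized at $k=n-1$. Setting $s$ near $1$ with $k=n-1$ forces $n-k-s=1-s\to 0$, so the Eisenstein series on the right-hand side evaluates to $E_{n-1,\boldsymbol{K}}^{(n)}(Z,0)$. The problem thus reduces to two independent tasks: first, establishing holomorphy of $E_{n-1,\boldsymbol{K}}^{(n)}(Z,s)$ at $s=0$ together with rationality of its Fourier coefficients (so that $E_{n-1,\boldsymbol{K}}^{(n)}(Z,0)\in M_{n-1}(\varGamma_{n,\boldsymbol{K}})_{\mathbb{Q}}$), and second, analyzing the prefactor
$$F_n(s):=\frac{g_{n,n-1,\boldsymbol{K}}(n+1-2s)}{g_{n,n-1,\boldsymbol{K}}(2s+n-1)}$$
at $s=1$. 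The first task should follow from a Hermitian analog of Theorem \ref{Wei}, available under the standing hypothesis $h_{\boldsymbol{K}}=1$.

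For the second task, I would split $F_n(s)=\gamma_n(s)\,\xi_n(s)$, where $\gamma_n(s)$ bundles the $\Gamma_{n,\mathbb{C}}$-factors
$$\gamma_n(s)=\frac{\Gamma_{n,\mathbb{C}}(n-s)}{\Gamma_{n,\mathbb{C}}(\tfrac{n+1-2s}{2})}\cdot\frac{\Gamma_{n,\mathbb{C}}(\tfrac{2s+n-1}{2})}{\Gamma_{n,\mathbb{C}}(s+n-1)}$$
and $\xi_n(s)$ collects the completed Hecke $L$-factors
$$\xi_n(s)=\prod_{j=0}^{n-1}\frac{\xi(n+1-2s-j;\chi_{\boldsymbol{K}}^j)}{\xi(2s+n-1-j;\chi_{\boldsymbol{K}}^j)}.$$
A cancellation identity analogous to Lemma \ref{cancel} should collapse $\gamma_n(s)$ to a short $\Gamma$-ratio in which the potential $\Gamma(0)$-type singularities pair off between numerator and denominator, yielding a finite nonzero value at $s=1$ that is a rational number times a specific power of $\pi$. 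For $\xi_n(s)$, the denominator product has arguments $2,3,\dots,n+1$ at $s=1$, hence is holomorphic and nonzero. In the numerator, the arguments at $s=1$ are $n-1-j$ for $j=0,\dots,n-1$, and a parity analysis identifies the unique pole-bearing index: when $n$ is odd, $j=n-1$ gives the trivial-character factor $\xi(2-2s)$ (simple pole at $s=1$), while when $n$ is even, $j=n-2$ gives the trivial-character factor $\xi(3-2s)$ (simple pole at $s=1$); every other factor is either an entire $L$-completion with nontrivial $\chi_{\boldsymbol{K}}$ or has an integer argument away from $\{0,1\}$. The residue of the distinguished factor is $\pm\tfrac12$, computed from $\mathop{{\rm Res}}_{u=0}\xi(u)=-1$ by change of variable, exactly parallel to Lemma \ref{xinm}(2).

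Assembling the two pieces yields at most a simple pole of $F_n(s)$ at $s=1$ with residue equal to $\pi^{-n}$ times an explicit rational constant $c_{n,\boldsymbol{K}}$; multiplying by $E_{n-1,\boldsymbol{K}}^{(n)}(Z,0)$ then produces $f=c_{n,\boldsymbol{K}}\,E_{n-1,\boldsymbol{K}}^{(n)}(Z,0)\in M_{n-1}(\varGamma_{n,\boldsymbol{K}})_{\mathbb{Q}}$ and proves the theorem, in fact refining it by identifying $f$ explicitly. The main obstacle will be the arithmetic bookkeeping: carefully tracking the powers of $\pi$, of the discriminant $D_{\boldsymbol{K}}$ entering through the odd-$j$ completions $\xi(\,\cdot\,;\chi_{\boldsymbol{K}})$, and of $2$ through the $\Gamma$- and $\xi$-cancellations, and verifying that the transcendental contributions collapse to exactly $\pi^{-n}$ with a rational remainder. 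The parity of $n$ affects which index contributes the pole and which $\Gamma_{n,\mathbb{C}}$-factors are genuinely singular, so the two parities must be handled separately; conceptually, however, no step is new relative to the Siegel proof of Theorem \ref{Residue2}.
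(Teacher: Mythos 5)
Your plan is correct and coincides with the paper's own treatment: the paper states this theorem as a quotation of Shimura and then proves exactly your argument as its refinement (Theorems \ref{SU1} and \ref{SU2} with $m=1$), using the same splitting of the functional-equation prefactor into a $\Gamma_{n,\mathbb{C}}$-part (holomorphic and nonzero at $s=1$ via the cancellation law) and a completed-$L$ part from which the single pole-bearing factor $\xi(2-2s)$ with residue $\tfrac{1}{2}$ is extracted, together with the appeal to Shimura for holomorphy at $s=0$ and rationality. The only superfluous branch is your case of even $n$: since the paper defines $E_{k,\boldsymbol{K}}^{(n)}$ only for even weight $k=n-1$, the parameter $n$ is necessarily odd and no separate parity analysis is needed.
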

\subsection{Main result for Hermitian Eisenstein series}
Using the functional equation (\ref{FCH}), we can refine Shimura's result
for the Hermitian Eisenstein series.
\begin{Thm} \textbf{(SU case)}
\label{SU1}
{\it We assume that $h_{\boldsymbol{K}}=1$.
Let $n$ and $m$ be integers satisfying $n>m\geq 1$ and 
$n-m\in 2\mathbb{Z}_{>0}$.
Then, the Eisenstein series $E_{n-m,\boldsymbol{K}}^{(n)}(Z,s)$ has at most a simple
pole at $s=m$, and
\begin{equation}
\label{ResH}
\mathop{{\rm Res}}_{s=m}E_{n-m,\boldsymbol{K}}^{(n)}(Z,s)
=c_{n,m,\boldsymbol{K}}\cdot E_{n-m,\boldsymbol{K}}^{(n)}(Z,0).
\end{equation}
Here,
\begin{align*}
c_{n,m,\boldsymbol{K}} &=
                              (-1)^{m+1}\,\tfrac{1}{2}\cdot \prod_{j=0}^{n-m-2}\frac{(m+j)!}{j !}
                              \frac{\Gamma_{\frac{n-m}{2},\mathbb{C}}(n-m)}
                                     {\Gamma_{\frac{n-m}{2},\mathbb{C}}(n)}\\
                            & \quad \cdot
                            \frac{\prod_{\substack{0\leq j\leq n-1 \\ j\ne n-m}}\xi(n-m-j;\chi_{\boldsymbol{K}}^j)}
                            {\prod_{j=0}^{n-1}\xi(n+m-j;\chi_{\boldsymbol{K}}^j)} .
\end{align*}
}
\end{Thm}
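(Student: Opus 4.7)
The plan is to mirror the proof of Theorem \ref{Residue1} in the Hermitian setting. Applying the functional equation (\ref{FCH}) with weight parameter set to $n-m$ yields
\[
E_{n-m,\boldsymbol{K}}^{(n)}(Z,s) = G_{n,m,\boldsymbol{K}}(s)\,E_{n-m,\boldsymbol{K}}^{(n)}(Z,m-s),
\]
where $G_{n,m,\boldsymbol{K}}(s):=g_{n,n-m,\boldsymbol{K}}(n+m-2s)/g_{n,n-m,\boldsymbol{K}}(2s+n-m)$. Since $E_{n-m,\boldsymbol{K}}^{(n)}(Z,s)$ is holomorphic at $s=0$ (by the Hermitian analogue of Theorem \ref{Wei} available under the assumption $h_{\boldsymbol{K}}=1$), the task reduces to showing that $G_{n,m,\boldsymbol{K}}$ has a simple pole at $s=m$ with residue equal to $c_{n,m,\boldsymbol{K}}$.

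The next step is to factor $G_{n,m,\boldsymbol{K}}(s) = \gamma_{n,m,\boldsymbol{K}}(s)\cdot\xi_{n,m,\boldsymbol{K}}(s)$, where $\gamma$ collects the $\Gamma_{n,\mathbb{C}}$-factors and $\xi$ collects the $\xi(\,\cdot\,;\chi_{\boldsymbol{K}}^j)$-factors. The $\gamma$-part will be analyzed via a cancellation law analogous to Lemma \ref{cancel}, obtained by telescoping over consecutive integer shifts, which reduces $\gamma_{n,m,\boldsymbol{K}}$ from order $n$ to order $k=\tfrac{n-m}{2}$. I would then split the reduced ratio as $\gamma^{(\mathrm{I})}\cdot\gamma^{(\mathrm{II})}$: the first factor is holomorphic and nonzero at $s=m$, contributing $\Gamma_{k,\mathbb{C}}(n-m)/\Gamma_{k,\mathbb{C}}(n)$, while the second is an indeterminate form $0/0$ arising from matched $\Gamma$-poles in both numerator and denominator. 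Its value at $s=m$ is computed factor-by-factor via the identity (\ref{Id}) with $a=m$, producing the product of factorial ratios $\tfrac{(m+j)!}{j!}$ together with the accompanying sign.

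For the $\xi$-part, the crucial observation is that the hypothesis $n-m\in 2\mathbb{Z}_{>0}$ forces $\chi_{\boldsymbol{K}}^{n-m}=1$, so the $j=n-m$ term in the numerator of $\xi_{n,m,\boldsymbol{K}}$ is $\xi(2m-2s)$, the unique source of a singularity at $s=m$. I would extract it as $\xi_{n,m,\boldsymbol{K}}^{(\mathrm{II})}(s):=\xi(2m-2s)$ and compute its residue directly from the simple pole of the Riemann $\xi$-function at the origin, obtaining $\tfrac{1}{2}$. Holomorphy of the complementary factor $\xi_{n,m,\boldsymbol{K}}^{(\mathrm{I})}$ at $s=m$ is then verified by inspection: for even $j\ne n-m$ the numerator argument $n-m-j$ is a nonzero integer different from $1$, avoiding the poles of $\xi$; for odd $j$, $\xi(\,\cdot\,;\chi_{\boldsymbol{K}})$ is entire because the trivial zeros of $L(s;\chi_{\boldsymbol{K}})$ cancel the poles of the archimedean factor; and the denominator arguments $n+m-j\in\{m+1,\ldots,n+m\}$ stay well away from $\{0,1\}$. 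Multiplying $\gamma_{n,m,\boldsymbol{K}}(m)$ against $\mathop{{\rm Res}}_{s=m}\xi_{n,m,\boldsymbol{K}}(s)$ then gives the claimed constant.

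The main obstacle is the bookkeeping in $\gamma^{(\mathrm{II})}$: identifying which $\Gamma$-poles survive the cancellation law and correctly accumulating the signs from repeated application of (\ref{Id}). A secondary subtlety is the case-split in the holomorphy verification for $\xi_{n,m,\boldsymbol{K}}^{(\mathrm{I})}$, which depends crucially on the parity of $j$ and on the hypothesis $n-m\in 2\mathbb{Z}_{>0}$; without this parity constraint the would-be singular factor at $j=n-m$ would involve the entire function $\xi(\,\cdot\,;\chi_{\boldsymbol{K}})$ rather than the Riemann $\xi$, changing the character of the residue analysis entirely.
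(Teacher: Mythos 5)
Your proposal follows essentially the same route as the paper: the same functional equation with $k=n-m$, the same factorization $G_{n,m,\boldsymbol{K}}=\gamma_{n,m,\boldsymbol{K}}\cdot\xi_{n,m,\boldsymbol{K}}$, the same cancellation-law reduction of the $\Gamma_{n,\mathbb{C}}$-ratios to a product of length $\tfrac{n-m}{2}$ evaluated via the identity (\ref{Id}), and the same extraction of the single singular factor $\xi(2m-2s)$ (the $j=n-m$ term, Riemann because $n-m$ is even) with residue $\tfrac12$. The only differences are cosmetic (your labels I and II are swapped relative to the paper, and the paper attributes holomorphy at $s=0$ directly to Shimura rather than to a Hermitian analogue of Theorem \ref{Wei}).
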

\begin{Cor}{\it
If $n-m \equiv 2 \pmod{4}$, $E_{n-m,\boldsymbol{K}}^{(n)}(Z,s)$ is holomorphic at
$s=m$.}
\end{Cor}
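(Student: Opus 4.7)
The plan is to derive this corollary from Theorem \ref{SU1} exactly as the Siegel-case corollary (in $\S$ 2.5.1) was derived from Theorem \ref{Residue1}. Under the hypothesis $n-m\in 2\mathbb{Z}_{>0}$, Theorem \ref{SU1} tells us that $E_{n-m,\boldsymbol{K}}^{(n)}(Z,s)$ has at most a simple pole at $s=m$ and
$$
\mathop{{\rm Res}}_{s=m} E_{n-m,\boldsymbol{K}}^{(n)}(Z,s)=c_{n,m,\boldsymbol{K}}\cdot E_{n-m,\boldsymbol{K}}^{(n)}(Z,0).
$$
Hence it suffices to show that the right-hand side vanishes identically under the extra assumption $n-m\equiv 2\pmod{4}$. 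Once this vanishing is established, the residue at $s=m$ is zero, and combined with the at-most-simple-pole statement this forces holomorphy at $s=m$.

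The main step is therefore to verify the Hermitian analogue of Remark \ref{vanish}, i.e.\ that the Hermitian Eisenstein series $E_{k,\boldsymbol{K}}^{(n)}(Z)=E_{k,\boldsymbol{K}}^{(n)}(Z,0)$ vanishes identically when $k<n$ and $k\equiv 2\pmod{4}$. I would invoke this via two ingredients: first, the theory of singular Hermitian modular forms, which asserts that any $F\in M_k(\varGamma_{n,\boldsymbol{K}})$ of weight $k<n$ is singular, in the sense that its Fourier coefficients $a_F(H)$ vanish unless $\det H=0$; second, a parity/symmetry argument on these Fourier coefficients, which forces $a_F(H)=0$ for all $H$ whenever $k\equiv 2\pmod{4}$ (for instance by comparing the transformation behaviour of $F$ under the scalar matrix $\sqrt{-1}\cdot E_n\in\varGamma_{n,\boldsymbol{K}}$, whose $k$-th power acts by $(\sqrt{-1})^{nk}$, with the structure of singular Fourier expansions). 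In our situation $k=n-m$ satisfies $k<n$ (since $m\geq 1$) and $k\equiv 2\pmod 4$, so both conditions apply and $E_{n-m,\boldsymbol{K}}^{(n)}(Z,0)\equiv 0$.

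The routine part is the bookkeeping connecting Theorem \ref{SU1} with the desired holomorphy; the real obstacle is the appeal to the singular-form vanishing for Hermitian modular forms of weight $\equiv 2\pmod 4$. I would handle this either by citing the corresponding result in the Hermitian literature (the analogue of what is quoted in Remark \ref{vanish} for the Siegel case) or, failing that, by running the short direct argument outlined above using the action of scalar units of $\mathcal{O}_{\boldsymbol{K}}$ on $\Lambda_n(\boldsymbol{K})$ together with the singular-support property. Since $E_{n-m,\boldsymbol{K}}^{(n)}(Z,0)\in M_{n-m}(\varGamma_{n,\boldsymbol{K}})_{\mathbb{Q}}$ has rational Fourier coefficients, no delicate field-of-definition issue intervenes, and the argument closes in the same concise form as its Siegel counterpart.
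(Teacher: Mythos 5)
Your proposal follows essentially the same route as the paper: Theorem \ref{SU1} gives at most a simple pole at $s=m$ with residue $c_{n,m,\boldsymbol{K}}\cdot E_{n-m,\boldsymbol{K}}^{(n)}(Z,0)$, and the paper likewise disposes of the corollary in a single line by appealing to the theory of singular modular forms in the Hermitian case to conclude that $E_{n-m,\boldsymbol{K}}^{(n)}(Z,0)$ vanishes identically when the weight $n-m$ is $\equiv 2 \pmod 4$ and less than $n$. One caution about your proposed fallback argument: the scalar matrix $\sqrt{-1}\cdot E_n$ lies in $\varGamma_{n,\boldsymbol{K}}$ only for $\boldsymbol{K}=\mathbb{Q}(\sqrt{-1})$, and for the other imaginary quadratic fields with $h_{\boldsymbol{K}}=1$ the units of $\mathcal{O}_{\boldsymbol{K}}$ are just $\pm 1$ (detecting only the parity of $nk$, which is automatic here since $n-m$ is even), so that unit trick cannot establish the mod-$4$ vanishing in general; the vanishing genuinely rests on the theta-series classification of singular Hermitian modular forms from the literature, which is what the paper (implicitly) invokes.
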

The corollary arises from the theory of singular modular forms in the Hermitian
modular case.
\vspace{4mm}
\\
{\it Proof of Theorem} \ref{SU1}.\;
Considering (\ref{FCH}), we set
$$
G_{n,m,\boldsymbol{K}}(s)
:=\left.\frac{g_{n,k,\boldsymbol{K}}(2n-k-2s)}{g_{n,k,\boldsymbol{K}}(2s+k)}\right|_{k=n-m}
=\frac{g_{n,n-m,\boldsymbol{K}}(n+m-2s)}{g_{n,n-m,\boldsymbol{K}}(2s+n-m)}.
$$
If we use this notation, the functional equation (\ref{FCH}) in this case can be written as
$$
E_{n-m,\boldsymbol{K}}^{(n)}(Z,s)=G_{n,m,\boldsymbol{K}}(s)
                                      E_{n-m,\boldsymbol{K}}^{(n)}(Z,m-s).
$$
To prove Theorem \ref{SU1}, it is sufficient to show the following:
\vspace{2mm}
\\
(i)\quad 
$E_{n-m,\boldsymbol{K}}^{(n)}(Z,s)$ is holomorphic at $s=0$.
\vspace{1mm}
\\
(ii)\quad
$G_{n,m,\boldsymbol{K}}(s)$ has a simple pole at $s=m$, and the residue is
$c_{n,m,\boldsymbol{K}}$.
\vspace{4mm}
\\
If these two statements are proven, we then obtain
\begin{equation*}
\mathop{{\rm Res}}_{s=m}E_{n-m,\boldsymbol{K}}^{(n)}(Z,s)
=\mathop{{\rm Res}}_{s=m}G_{n,m,\boldsymbol{K}}(s)\cdot
 E_{n-m,\boldsymbol{K}}^{(n)}(Z,0).
\end{equation*}
Next, we prove (i) and (ii).
\\
Statement (i) is based on Shimura's results in \cite{Sh}.
\\
We prove (ii) according to the discussion regarding the
Siegel Eisenstein series.
\\
We consider the following expression of $G_{n,m,\boldsymbol{K}}(s)$, as in (\ref{Fnm}):
\begin{align}
\label{DecH}
G_{n,m,\boldsymbol{K}}(s) &=\gamma_{n,m,\boldsymbol{K}}(s)\,\xi_{n,m,\boldsymbol{K}}(s),\\
   & \gamma_{n,m,\boldsymbol{K}}(s):=\frac{\Gamma_{n,\mathbb{C}}(n-s)}
                                                              {\Gamma_{n,\mathbb{C}}(s+\frac{n+m}{2})}
                                                   \cdot \frac{\Gamma_{n,\mathbb{C}}(s+\frac{n-m}{2})}
                                                              {\Gamma_{n,\mathbb{C}}(s+n-m)}
                                                              \nonumber
                                                              \\
      & \xi_{n,m,\boldsymbol{K}}(s):=\prod_{j=0}^{n-1}
                                                \frac{\xi(n+m-2s-j;\chi_{\boldsymbol{K}}^j)}
                                                {\xi(2s+n-m-j;\chi_{\boldsymbol{K}}^j)}.
                                                \nonumber
\end{align}
\subsubsection{Analysis of $\boldsymbol{\gamma_{n,m,\boldsymbol{K}}(s)}$}
First, we present the following cancellation law:
$$
\frac{\Gamma_{n,\mathbb{C}}(\frac{s+k}{2})}{\Gamma_{n,\mathbb{C}}(\frac{s}{2})}
=\prod_{j=0}^{\frac{k-2}{2}}\frac{\Gamma(\frac{s}{2}+\frac{k}{2}-j)}
{\Gamma(\frac{s}{2}+\frac{k}{2}-n-j)}\qquad (n>k).
$$
Hence, it is clear that
$$
\gamma_{n,m,\boldsymbol{K}}(s)
=\prod_{j=0}^{\frac{n-m-2}{2}}
\frac{\Gamma(n-s-j)}{\Gamma(-s-j)}\,
                                          \frac{\Gamma(s-m-j)}{\Gamma(s+n-m-j)}.
$$
Next, we use the following decomposition of $\gamma_{n,m,\boldsymbol{K}}(s)$:
\begin{align}
\label{Decgam}
& \gamma_{n,m,\boldsymbol{K}}(s) =
\gamma_{n,m,\boldsymbol{K}}^{(\text{I})}(s)\cdot \gamma_{n,m,\boldsymbol{K}}^{(\text{II})}(s)\\
 &\quad  \gamma_{n,m,\boldsymbol{K}}^{(\text{I})}(s):=\prod_{j=0}^{\frac{n-m-2}{2}}
                                                             \frac{\Gamma(s-m-j)}{\Gamma(-s-j)},\quad
   \gamma_{n,m,\boldsymbol{K}}^{(\text{II})}(s):=\prod_{j=0}^{\frac{n-m-2}{2}}
                                                             \frac{\Gamma(n-s-j)}{\Gamma(s+n-m-j)}.
                                                             \nonumber
\end{align}
Direct calculation shows that
\begin{align*}
& \lim_{s\to m}
\gamma_{n,m,\boldsymbol{K}}^{(\text{I})}(s)
 =\prod_{j=0}^{\frac{n-m-2}{2}}\left(\lim_{t\to 0}\frac{\Gamma(t-j)}{\Gamma(-t-m-j)}\right)
=(-1)^{m+1}\prod_{j=0}^{\frac{n-m-2}{2}}\frac{(m+j)!}{j\,!},\\
& ({\rm cf.} (\ref{Id}))\quad {\rm and}\\
& \gamma_{n,m,\boldsymbol{K}}^{(\text{II})}(m)
=
\prod_{j=0}^{\frac{n-m-2}{2}}
\frac{\Gamma(n-m-j)}{\Gamma(n-j)}
=\frac{\Gamma_{\frac{n-m}{2},\mathbb{C}}(n-m)}{\Gamma_{\frac{n-m}{2},\mathbb{C}}(n)}.
\end{align*}
Therefore,
$\gamma_{n,m,\boldsymbol{K}}(s) =
\gamma_{n,m,\boldsymbol{K}}^{(\text{I})}(s)\cdot \gamma_{n,m,\boldsymbol{K}}^{(\text{II})}(s)$
is holomorphic at $s=m$, and 
\begin{align*}
\lim_{s\to m}
\gamma_{n,m,\boldsymbol{K}}(s) &= \lim_{s\to m} \gamma_{n,m,\boldsymbol{K}}^{(\text{I})}(s)\cdot
                                          \gamma_{n,m,\boldsymbol{K}}^{(\text{II})}(m)\\
                                         &=(-1)^{m+1} \prod_{j=0}^{\frac{n-m-2}{2}}\frac{(m+j)!}{j\,!}\cdot
                          \frac{\Gamma_{\frac{n-m}{2},\mathbb{C}}(n-m)}{\Gamma_{\frac{n-m}{2},\mathbb{C}}(n)}.
\end{align*}
\subsubsection{Analysis of $\boldsymbol{\xi_{n,m,\boldsymbol{K}}(s)}$}
\label{analyxi}
We recall
$$
\xi_{n,m,\boldsymbol{K}}(s):=\prod_{j=0}^{n-1}
                                                \frac{\xi(n+m-2s-j;\chi_{\boldsymbol{K}}^j)}
                                                {\xi(2s+n-m-j;\chi_{\boldsymbol{K}}^j)},
$$
and decompose this as
\begin{align*}
& \xi_{n,m,\boldsymbol{K}}(s) = \rho_{n,m,\boldsymbol{K}}(s)\cdot \xi(2m-2s),\\
  & \qquad \quad          \rho_{n,m,\boldsymbol{K}}(s)=
                \frac{\prod_{\substack{0\leq j\leq n-1 \\ j\ne n-m}}\xi(n+m-2s-j;\chi_{\boldsymbol{K}}^j)}
                            {\prod_{j=0}^{n-1}\xi(2s+n-m-j;\chi_{\boldsymbol{K}}^j)}.
\end{align*}
(It should be noted that $n-m$ is even.) Since $\xi(s;;\chi_{\boldsymbol{K}})$ is holomorphic at $s=1$,
$\rho_{n,m,\boldsymbol{K}}(s)$ is holomorphic at $s=m$ and
$$
\rho_{n,m,\boldsymbol{K}}(m)=
 \frac{\prod_{\substack{0\leq j\leq n-1 \\ j\ne n-m}}\xi(n-m-j;\chi_{\boldsymbol{K}}^j)}
                            {\prod_{j=0}^{n-1}\xi(n+m-j;\chi_{\boldsymbol{K}}^j)}.
$$
Because $\mathop{{\rm Res}}_{s=m}\xi(2m-2s)=1/2$, we obtain
$$
\mathop{{\rm Res}}_{s=m}\xi_{n,m,\boldsymbol{K}}(s)=\tfrac{1}{2}\,\rho_{n,m,\boldsymbol{K}}(m).
$$
Consequently, $G_{n,m,\boldsymbol{K}}(s)$ has a simple pole
at $s=m$, and the residue is expressed as
$$
\mathop{{\rm Res}}_{s=m}G_{n,m,\boldsymbol{K}}(s)=
c_{n,m,\boldsymbol{K}}\in\mathbb{R}^\times.
$$
This proves (ii)
and consequently completes the proof of Theorem \ref{SU1}.
\vspace{2mm}
\\
The following theorem is an analogous result of Theorem \ref{ShimuraMain}
in the Siegel modular case, and this theorem specifies the modular form $f$, provided
by Shimura (cf. Theorem \ref{ShimuraMainH} ).
\begin{Thm} \textbf{(SU case)}
 \label{SU2}.
{\it We assume that $h_{\boldsymbol{K}}=1$ and
$n \equiv 1 \pmod{2}$.\\
Then, we have
$$
\mathop{{\rm Res}}_{s=1}E_{n-1,\boldsymbol{K}}^{(n)}(Z,s)
=\pi^{-n}\cdot c_{n,\boldsymbol{K}}\cdot E_{n-1,\boldsymbol{K}}^{(n)}(Z,0),
$$
where constant $c_{n,\boldsymbol{K}}$ is given by
\begin{equation*}
c_{n,\boldsymbol{K}}= 2^{-2n}\cdot D_{\boldsymbol{K}}^{\frac{n-1}{2}}\cdot n\cdot n!\,
                               \frac{B_{1,\chi_{\boldsymbol{K}}}}
                                       {B_{n,\chi_{\boldsymbol{K}}}\cdot B_{n+1}}
                               \in\mathbb{Q}^\times,
\end{equation*}
where $B_m$ {\rm (}resp. $B_{m,\chi}$ {\rm )} is the $m$-th Bernoulli
{\rm (}resp. generalized Bernoulli{\rm )}
number.
}
\end{Thm}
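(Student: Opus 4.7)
The theorem is the $m=1$ specialization of Theorem \ref{SU1}. Since the hypothesis $n \equiv 1 \pmod 2$ forces $n-1 \in 2\mathbb{Z}_{>0}$, Theorem \ref{SU1} applies with $m=1$ and gives
\[
\mathop{\mathrm{Res}}_{s=1} E_{n-1,\boldsymbol{K}}^{(n)}(Z,s) \;=\; c_{n,1,\boldsymbol{K}}\cdot E_{n-1,\boldsymbol{K}}^{(n)}(Z,0).
\]
The task therefore reduces to computing $c_{n,1,\boldsymbol{K}}$ explicitly and verifying the identity $c_{n,1,\boldsymbol{K}} = \pi^{-n}\cdot c_{n,\boldsymbol{K}}$ with $c_{n,\boldsymbol{K}}$ as stated. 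The plan mirrors the derivation of the Siegel analogue in $\S$\ref{Residue2}: simplify the $\gamma$-prefactor by direct algebra, exploit cancellation inside the $\xi$-product ratio, and evaluate the surviving completed $L$-values at integer points in terms of Bernoulli numbers.

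Setting $m=1$ in the explicit formula for $c_{n,1,\boldsymbol{K}}$, the sign $(-1)^{m+1}$ becomes $+1$, the factorial product collapses (each factor $(1+j)!/j! = j+1$), and the ratio $\Gamma_{(n-1)/2,\mathbb{C}}(n-1)/\Gamma_{(n-1)/2,\mathbb{C}}(n)$ telescopes to $((n-1)/2)!/(n-1)!$. For the $\xi$-product ratio I split the index $j$ by parity: even $j$ contributes $\xi(\cdot)$-factors, odd $j$ contributes $\xi(\cdot;\chi_{\boldsymbol{K}})$-factors. Since $n$ is odd, within each parity class the numerator and denominator arguments $n-1-j$ and $n+1-j$ form interlocking arithmetic progressions that mostly cancel; what survives is
\[
\frac{\xi(1;\chi_{\boldsymbol{K}})}{\xi(n+1)\cdot \xi(n;\chi_{\boldsymbol{K}})}.
\]

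I then evaluate the three survivors. For $\xi(1;\chi_{\boldsymbol{K}})$ I move to $\xi(0;\chi_{\boldsymbol{K}}) = \sqrt{\pi}\,L(0,\chi_{\boldsymbol{K}}) = -\sqrt{\pi}\,B_{1,\chi_{\boldsymbol{K}}}$. For $\xi(n+1)$ I apply Euler's formula $\zeta(n+1) = (-1)^{(n+3)/2}(2\pi)^{n+1}B_{n+1}/(2(n+1)!)$, valid since $n+1$ is even. For $\xi(n;\chi_{\boldsymbol{K}})$ I apply the functional equation to move to $\xi(1-n;\chi_{\boldsymbol{K}})$, then use $L(1-n,\chi_{\boldsymbol{K}}) = -B_{n,\chi_{\boldsymbol{K}}}/n$ (valid since $n$ odd matches the parity of $\chi_{\boldsymbol{K}}$) and evaluate the gamma factor $\Gamma((2-n)/2) = \Gamma(1/2 - (n-1)/2)$ by iterating $\Gamma(z)=\Gamma(z+1)/z$ from $\Gamma(1/2)=\sqrt{\pi}$.

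The main obstacle will be the final bookkeeping step. After substituting the three evaluations and multiplying by the $\gamma$-prefactor together with the factor $\tfrac{1}{2}$ coming from $\mathop{\mathrm{Res}}_{s=1}\xi(2-2s)$, I must verify that the powers of $\pi$ combine to exactly $\pi^{-n}$, the powers of $D_{\boldsymbol{K}}$ to $D_{\boldsymbol{K}}^{(n-1)/2}$, and the remaining rational factor simplifies, via $n$-odd identities such as $(n-1)! = 2^{(n-1)/2}((n-1)/2)!\cdot (n-2)!!$, to $2^{-2n}\cdot n\cdot n!\cdot B_{1,\chi_{\boldsymbol{K}}}/(B_{n,\chi_{\boldsymbol{K}}}B_{n+1})$. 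In particular the three sign factors $(-1)^{(n-1)/2}$ (from $\Gamma(1/2-(n-1)/2)$), $(-1)^{(n+3)/2}$ (from $\zeta(n+1)$), and the two minus signs coming from $L(0,\chi_{\boldsymbol{K}})$ and $L(1-n,\chi_{\boldsymbol{K}})$ must combine to $+1$ so that no residual sign appears in $c_{n,\boldsymbol{K}}$.
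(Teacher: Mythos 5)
Your route is exactly the paper's: specialize Theorem \ref{SU1} to $m=1$, evaluate $\gamma_{n,1,\boldsymbol{K}}$ at $s=1$ (getting $\bigl(\tfrac{n-1}{2}\bigr)!\cdot\bigl(\tfrac{n-1}{2}\bigr)!/(n-1)!$), reduce the $\xi$-ratio by cancellation to the three survivors $\xi(1;\chi_{\boldsymbol{K}})/\bigl(\xi(n+1)\,\xi(n;\chi_{\boldsymbol{K}})\bigr)$, evaluate them by the functional equations and the Bernoulli formulas, and multiply by the residue $\tfrac12$ of $\xi(2-2s)$. All of these ingredients, including your sign accounting, match the paper's proof step for step.

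The one real issue is the step you explicitly defer as ``the main obstacle'': the final bookkeeping does \emph{not} close on the constant as stated. Carrying out your plan with the three evaluations
$\xi(1;\chi_{\boldsymbol{K}})=-\pi^{1/2}B_{1,\chi_{\boldsymbol{K}}}$,
$\xi(n+1)=(-1)^{\frac{n+3}{2}}2^{n}\pi^{\frac{n+1}{2}}\bigl(\tfrac{n-1}{2}\bigr)!\,B_{n+1}/(n+1)!$,
$\xi(n;\chi_{\boldsymbol{K}})=(-1)^{\frac{n+1}{2}}D_{\boldsymbol{K}}^{\frac{1-n}{2}}2^{n-1}\pi^{\frac{n}{2}}\bigl(\tfrac{n-1}{2}\bigr)!\,B_{n,\chi_{\boldsymbol{K}}}/n!$
gives
$$
\mathop{{\rm Res}}_{s=1}G_{n,1,\boldsymbol{K}}(s)
=\pi^{-n}\cdot 2^{-2n}\cdot D_{\boldsymbol{K}}^{\frac{n-1}{2}}\cdot
\frac{n!\,(n+1)!}{(n-1)!}\cdot
\frac{B_{1,\chi_{\boldsymbol{K}}}}{B_{n,\chi_{\boldsymbol{K}}}\,B_{n+1}},
$$
and $n!\,(n+1)!/(n-1)!=n\cdot(n+1)!$, not the $n\cdot n!$ appearing in the statement you are asked to prove. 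This is an inconsistency internal to the paper itself: the last display of its proof of Theorem \ref{SU2} reads $n\cdot(n+1)!$ while the theorem box reads $n\cdot n!$, a discrepancy by a factor $(n+1)$. So you should not expect the verification you postponed to ``simplify to $2^{-2n}\cdot n\cdot n!\cdots$''; you will land on $n\cdot(n+1)!$, and you need either to locate a compensating factor (there is none in the paper's intermediate formulas, which I have checked) or to flag that the stated constant requires correction. Apart from this, your proposal is sound and identical in method to the paper's argument.
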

{\it Proof.}\quad 
This result means that constant $c_{n,1,\boldsymbol{K}}$ provided in Theorem \ref{SU1}
can be written as
$$
c_{n,1,\boldsymbol{K}}=\pi^{-n}\cdot c_{n,\boldsymbol{K}}
\quad (c_{n,\boldsymbol{K}}\in \mathbb{Q}^\times).
$$
We recall the decomposition, 
$$
G_{n,1,\boldsymbol{K}}(s) =\gamma_{n,1,\boldsymbol{K}}(s)\,\xi_{n,1,\boldsymbol{K}}(s).
\quad ({\rm cf.}\,  (\ref{DecH}))
$$
\subsubsection{Calculation of $\boldsymbol{\gamma_{n,1,\boldsymbol{K}}(s)}$}
We apply the decomposition shown in (\ref{Decgam}) for $m=1$.
\begin{align*}
& \gamma_{n,1,\boldsymbol{K}}(s) =
\gamma_{n,1,\boldsymbol{K}}^{(\text{I})}(s)\cdot \gamma_{n,1,\boldsymbol{K}}^{(\text{II})}(s)\\
 &\qquad  \gamma_{n,1,\boldsymbol{K}}^{(\text{I})}(s):=\prod_{j=0}^{\frac{n-3}{2}}
                                                             \frac{\Gamma(s-1-j)}{\Gamma(-s-j)},\quad
   \gamma_{n,1,\boldsymbol{K}}^{(\text{II})}(s):=\prod_{j=0}^{\frac{n-3}{2}}
                                                             \frac{\Gamma(n-s-j)}{\Gamma(s+n-1-j)}.
\end{align*}
Direct calculation shows that
\begin{align*}
& \lim_{s\to 1}\gamma_{n,1,\boldsymbol{K}}^{(\text{I})}(s)
=\prod_{j=0}^{\frac{n-3}{2}}\left( \lim_{s\to 1}  \frac{\Gamma(s-1-j)}{\Gamma(-s-j)} \right)
=\prod_{j=0}^{\frac{n-3}{2}}(j+1)=\left(\tfrac{n-1}{2}\right)!\,,\\
& \lim_{s\to 1}\gamma_{n,1,\boldsymbol{K}}^{(\text{II})}(s)
=\prod_{j=0}^{\frac{n-3}{2}}\frac{\Gamma(n-1-j)}{\Gamma(n-j)}
=\frac{\Gamma\left(\frac{n+1}{2}\right)}{\Gamma(n)}=\frac{\left(\frac{n-1}{2}\right)!}{(n-1)!}.
\end{align*}
Therefore, we obtain
\begin{equation}
\label{gamH}
\lim_{s\to 1} \gamma_{n,1,\boldsymbol{K}}(s)=
\frac{\left\{ \left( \frac{n-1}{2}\right)! \right\}^2}{(n-1)!}.
\end{equation}
\subsubsection{Calculation of $\boldsymbol{\xi_{n,1,\boldsymbol{K}}(s)}$}
We recall
$$
\xi_{n,1,\boldsymbol{K}}(s):=\prod_{j=0}^{n-1}
                                                \frac{\xi(n+1-2s-j;\chi_{\boldsymbol{K}}^j)}
                                                {\xi(2s+n-1-j;\chi_{\boldsymbol{K}}^j)},
$$
and decompose the right-hand side as
\begin{align*}
\xi_{n,1,\boldsymbol{K}}(s) &=\rho_{n,1,\boldsymbol{K}}(s)\,\xi(2-2s),
\vspace{2mm}
\\
& \rho_{n,1,\boldsymbol{K}}(s)=
\frac{\prod_{j=0}^{n-2} \xi(n+1-2s-j;\chi_{\boldsymbol{K}}^j)}
        {\prod_{j=0}^{n-1}\xi(2s+n-1-j;\chi_{\boldsymbol{K}}^j) }.
\end{align*}
From the general result for $\rho_{n,m,\boldsymbol{K}}(s)$ ($\S$ \ref{analyxi}), $\rho_{n,1,\boldsymbol{K}}(s)$ 
is holomorphic at $s=1$,
and $\xi(2-2s)$ has a simple pole $s=1$. That is,
$$
\mathop{{\rm Res}}_{s=1}\xi_{n,1,\boldsymbol{K}}(s)
= \rho_{n,1,\boldsymbol{K}}(1)\cdot \mathop{{\rm Res}}_{s=1}\xi(2-2s)
=\tfrac{1}{2}\rho_{n,1,\boldsymbol{K}}(1).
$$
Next, we calculate
$\rho_{n,1,\boldsymbol{K}}(1)$. By definition, we can write
$$
\rho_{n,1,\boldsymbol{K}}(1)
=\frac{\prod_{j=0}^{n-2} \xi(n-1-j;\chi_{\boldsymbol{K}}^j)}
        {\prod_{j=0}^{n-1}\xi(n+1-j;\chi_{\boldsymbol{K}}^j) }
=\frac{\xi(1;\chi_{\boldsymbol{K}})}{\xi(n+1)\,\xi(n;\chi_{\boldsymbol{K}})}.
$$
Furthermore,
\begin{align*}
& \xi(1;\chi_{\boldsymbol{K}})=-\pi^{\frac{1}{2}}\,B_{1,\chi_{\boldsymbol{K}}},\\
& \xi(n;\chi_{\boldsymbol{K}})=(-1)^{\frac{n+1}{2}}D_{\boldsymbol{K}}^{\frac{1-n}{2}}
                                              2^{n-1}\pi^{\frac{n}{2}}\left(\tfrac{n-1}{2}\right)!\,
                                              \frac{B_{n,\chi_{\boldsymbol{K}}}}{n!},\\
& \xi(n+1)=(-1)^{\frac{n+3}{2}}2^n\pi^{\frac{n+1}{2}}\left(\tfrac{n-1}{2}\right)!\,
                 \frac{B_{n+1}}{(n+1)!},
\end{align*}
Therefore, we have
$$
\rho_{n,1,\boldsymbol{K}}(1)
=\pi^{-n}\cdot D_{\boldsymbol{K}}^{\frac{n-1}{2}}\cdot 2^{1-2n}
\frac{n!\cdot (n+1)!}{\left\{ \left( \frac{n-1}{2}\right)! \right\}^2}
\frac{B_{1,\chi_{\boldsymbol{K}}}}{B_{n,\chi_{\boldsymbol{K}}}\cdot B_{n+1}}.
$$
Consequently, we obtain
\begin{align*}
\mathop{{\rm Res}}_{s=1}\,G_{n,1,\boldsymbol{K}}(s)
&= \gamma_{n,1,\boldsymbol{K}}(1)\,\mathop{{\rm Res}}_{s=1} \xi_{n,1,\boldsymbol{K}}(s)\\
&= \gamma_{n,1,\boldsymbol{K}}(1)\cdot \rho_{n,1,\boldsymbol{K}}(1)\cdot
      \mathop{{\rm Res}}_{s=1}\,\xi(2-2s)\\
&=\pi^{-n}\cdot 2^{-2n}\cdot D_{\boldsymbol{K}}^{\frac{n-1}{2}}\cdot n\cdot (n+1)!
       \frac{B_{1,\chi_{\boldsymbol{K}}}}{B_{n,\chi_{\boldsymbol{K}}}\cdot B_{n+1}}.
\end{align*}
This completes the proof of Theorem \ref{SU2}.
\hfill $\square$
\vspace{4mm}
\\
\section{Results for other types of Eisenstein series}
In this section, we provide some results analogous to those obtained
in the previous sections for the Eisenstein series on a quaternion half-space and an exceptional domain.
Both proofs are based on the functional equation of the corresponding Eisenstein series.

\subsection{Eisenstein series on quaternion half-space}
In \cite{Kim}, Kim studied the Eisenstein series $E_n(k,s,Z)$, on a quaternion half-space
 (\cite{Kim}, p. 215).
\\
In this study, we adopt the following normalization:
$$
E_{k,\mathbb{H}}^{(n)}(Z,s):=\text{det}(Y)^sE_n(k,2s,Z).
$$
He proved the following theorem:
\begin{Thm} (Kim \cite{Kim}, Theorem B)
\label{TheoremB}
{\it 
If $n$ is odd,
then $E_{2n-2,\mathbb{H}}^{(n)}(Z,s)$ has a simple pole at $s=1$, and the residue
is $\pi^{-n}$ times a singular modular form of weight $2n-2$ with rational
Fourier coefficients.
}
\end{Thm}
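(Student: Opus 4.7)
The plan is to run the same three-step machine used in the Siegel case (Theorem \ref{Residue2}) and the Hermitian case (Theorem \ref{SU2}). First, I would invoke Kim's functional equation for $E_{k,\mathbb{H}}^{(n)}(Z,s)$ from \cite{Kim} and specialize it to $k=2n-2$, obtaining a relation
$$
E_{2n-2,\mathbb{H}}^{(n)}(Z,s)=H_n(s)\,E_{2n-2,\mathbb{H}}^{(n)}(Z,1-s),
$$
where $H_n(s)$ is an explicit quotient of quaternionic $\Gamma$-factors and completed zeta factors, directly analogous to $F_{n,m}(s)$ in (\ref{Fnm}) and $G_{n,m,\boldsymbol{K}}(s)$ in (\ref{DecH}).

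Next I would factor $H_n(s)=\gamma_n(s)\,\xi_n(s)$ into a gamma part and a zeta part. Applying the cancellation law (quaternionic analog of Lemma \ref{cancel}), $\gamma_n(s)$ telescopes to a product of ordinary $\Gamma$-quotients, each finite and nonzero at $s=1$; tracking the limits via identity (\ref{Id}) gives an explicit value $\gamma_n(1)\in\mathbb{Q}^\times$. For the zeta part, exactly one factor of the form $\xi(2-2s)$ supplies the simple pole at $s=1$ with residue $\tfrac{1}{2}$ (cf.\ (\ref{xiII})), while the remaining $\xi$-values occur at integer arguments $\geq 2$ or $\leq 0$, all holomorphic points of $\xi$. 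Expressing $\xi(1-2k)$ and $\xi(2k)$ in terms of Bernoulli numbers exhibits the remaining ratio as $\pi^{-n}$ times an explicit element of $\mathbb{Q}^\times$. Combining these gives
$$
\mathop{{\rm Res}}_{s=1}H_n(s)=\pi^{-n}\cdot c_{n,\mathbb{H}},\qquad c_{n,\mathbb{H}}\in\mathbb{Q}^\times.
$$

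Because $E_{2n-2,\mathbb{H}}^{(n)}(Z,s)$ is holomorphic at $s=0$ (the quaternionic analog of Theorem \ref{Wei} (1)--(2), which I would cite from the quaternionic literature), the functional equation yields
$$
\mathop{{\rm Res}}_{s=1}E_{2n-2,\mathbb{H}}^{(n)}(Z,s)=\pi^{-n}\cdot c_{n,\mathbb{H}}\cdot E_{2n-2,\mathbb{H}}^{(n)}(Z,0),
$$
and $E_{2n-2,\mathbb{H}}^{(n)}(Z,0)$ is a modular form with rational Fourier coefficients (analog of Theorem \ref{Wei} (3)). Finally, since the weight $2n-2$ lies below the singular-weight threshold for the quaternionic symplectic group of degree $n$ (for odd $n$), the theory of singular modular forms forces $E_{2n-2,\mathbb{H}}^{(n)}(Z,0)$ to be singular.

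The main obstacle is not the residue computation itself, which reduces to routine bookkeeping once $H_n(s)$ is written out, but rather securing the two auxiliary inputs in the quaternionic theory: holomorphy of $E_{2n-2,\mathbb{H}}^{(n)}(Z,s)$ at $s=0$ together with rationality of the Fourier coefficients there, and the precise identification of $2n-2$ as lying in the singular range for odd $n$. Both are standard but must be cited carefully from Kim \cite{Kim} and the related quaternionic literature; once they are in place, Propositions \ref{PP1} and \ref{PP2} translate verbatim to the quaternionic setting.
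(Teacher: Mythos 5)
Your proposal is methodologically sound, but be aware that the paper does not prove this statement at all: Theorem \ref{TheoremB} is quoted verbatim from Kim's Theorem B and serves only as background. What you have reconstructed is, essentially, the paper's own (very briefly sketched) argument for the \emph{subsequent} refinement, the Theorem (Quaternionic case): specialize Kim's functional equation with $\kappa(n)=2n-1$ and $k=2n-2$ so that it relates $s$ to $1-s$, isolate the single polar factor $\xi(2-2s)$, evaluate the remaining $\Gamma$- and $\xi$-quotients, and use holomorphy at $s=0$; this indeed recovers Kim's statement with an explicit constant, modulo the two auxiliary inputs you correctly flag. Two small corrections to your bookkeeping: the residue $\tfrac{1}{2}$ of $\xi(2-2s)$ at $s=1$ is the computation done in the Hermitian case ($\mathop{{\rm Res}}_{s=m}\xi(2m-2s)=\tfrac{1}{2}$), not equation (\ref{xiII}), which gives $\tfrac{1}{4}$ for the differently scaled factor $\xi(2m+2-4s)$; and the elementary polynomial factors $\prod_{i}\prod_{j}(s+k-2i-j)$ in Kim's $h_{n,k}(s)$ vanish at $s=1$ in both numerator and denominator ($n-1$ linear factors each), so these must be cancelled against one another before one can assert that the non-$\xi$ part of $H_n(s)$ is finite and nonzero at $s=1$.
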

This theorem is a quaternionic analogy of Shimura's results (Theorem \ref{ShimuraMain}, Theorem \ref{ShimuraMainH}).
A similar argument in the previous sections yields the following result.
\begin{Thm} \textbf{(Quaternionic case)}
{\it 
Assume that $n$ is odd.
The Eisenstein series
$E_{2n-2,\mathbb{H}}^{(n)}(Z,s)$ has a simple pole at $s=1$, and the residue is
$$
\mathop{{\rm Res}}_{s=1}E_{2n-2,\mathbb{H}}^{(n)}(Z,s)
=\pi^{-n}\cdot c_{n,\mathbb{H}}\cdot E_{2n-2,\mathbb{H}}^{(n)}(Z,0),
$$
where
$$
c_{n,\mathbb{H}}=   2^{\frac{3-7n}{2}}\cdot\frac{(2n)!}{(n-1)!}\cdot\frac{1}{B_{2n}}\,
          \prod_{i=1}^{\frac{n-1}{2}}\frac{1-2^{4i-2n}}{1-2^{4i-2n-2}}.
$$
Here, $B_m$ is the $m$-th Bernoulli number.
}
\end{Thm}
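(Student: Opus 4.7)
The plan is to mirror the arguments used for Theorems \ref{Residue2} and \ref{SU2}: start from the functional equation of the quaternion Eisenstein series, decompose the scattering factor into a $\gamma$-part and a $\xi$-part, evaluate each at $s=1$ separately, and then recover the residue formula by invoking holomorphy of $E_{2n-2,\mathbb{H}}^{(n)}(Z,s)$ at $s=0$.

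First, I would write down explicitly the functional equation of $E_{2n-2,\mathbb{H}}^{(n)}(Z,s)$ in Kim's setting from \cite{Kim}: it relates $E_{2n-2,\mathbb{H}}^{(n)}(Z,s)$ to $E_{2n-2,\mathbb{H}}^{(n)}(Z,1-s)$ through a ratio $F_{n,\mathbb{H}}(s)$ of quaternionic Gamma factors and completed zeta values which, in this setting, carry the extra local Euler factors of the form $(1-2^{\cdots})$ characteristic of the quaternion case. I would then split $F_{n,\mathbb{H}}(s)=\gamma_{n,\mathbb{H}}(s)\,\xi_{n,\mathbb{H}}(s)$ with $\gamma_{n,\mathbb{H}}$ collecting the Gamma-ratios and $\xi_{n,\mathbb{H}}$ the completed-zeta ratios. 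Since $E_{2n-2,\mathbb{H}}^{(n)}(Z,s)$ is holomorphic at $s=0$ (a fact already used implicitly in Kim's Theorem \ref{TheoremB}), it suffices to prove that $F_{n,\mathbb{H}}$ has a simple pole at $s=1$ with residue $\pi^{-n}\,c_{n,\mathbb{H}}$.

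For the $\gamma$-part I would apply a cancellation law analogous to Lemma \ref{cancel}, which collapses the degree-$n$ quaternionic Gamma ratio to one of degree $\tfrac{n-1}{2}$, and then evaluate at $s=1$; any $0/0$ indeterminacy arising at nonpositive integer arguments of individual $\Gamma$-factors would be resolved by the limiting identity \eqref{Id}. This step produces a closed-form value of $\gamma_{n,\mathbb{H}}(1)$ as an explicit rational combination of factorials and double factorials together with a sign, exactly as in Proposition \ref{PP1}. For the $\xi$-part, a single factor of the form $\xi(2-2s)$ supplies the simple pole at $s=1$ with residue $\tfrac{1}{2}$; the remaining factors evaluate to completed zeta values at finite integer arguments, each convertible through the standard evaluations and the zeta functional equation into rational expressions in $\pi$, $2$-powers, and Bernoulli numbers $B_{2\ell}$. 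Multiplying $\gamma_{n,\mathbb{H}}(1)$ by the residue of $\xi_{n,\mathbb{H}}$ at $s=1$ then yields $\pi^{-n}\,c_{n,\mathbb{H}}$.

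The main obstacle will be the bookkeeping of $2$-powers and of the quaternionic Euler-factor products. Two independent sources of $2$-powers have to be tracked and balanced: those accumulated from the Bernoulli substitutions $\zeta(1-2\ell)=-B_{2\ell}/(2\ell)$ and from duplication-type identities hidden in the quaternionic Gamma factors, versus the local factors $(1-2^{\cdots})$ contributed by the quaternionic completed zeta. Only after these telescope does the clean form $\prod_{i=1}^{(n-1)/2}(1-2^{4i-2n})/(1-2^{4i-2n-2})$ emerge, together with the prefactor $2^{(3-7n)/2}$ and the Bernoulli ratio $1/B_{2n}$. The singular nature of the residual modular form (which was the qualitative content of Theorem \ref{TheoremB}) plays no additional role in the computation; it follows automatically because $E_{2n-2,\mathbb{H}}^{(n)}(Z,0)$ is a singular modular form of weight $2n-2$.
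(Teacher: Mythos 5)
Your proposal is correct and follows essentially the same route as the paper, which itself only sketches this case: it states Kim's functional equation (relating $s$ to $\kappa(n)-k-s=1-s$ for $k=2n-2$) and asserts that the residue computation proceeds "as in earlier cases," i.e.\ by isolating the single pole-producing factor $\xi(2-2s)$ (residue $\tfrac12$) and evaluating the remaining archimedean, zeta, and $(1-2^{\cdots})$ factors at $s=1$. The only cosmetic difference is that in Kim's $h_{n,k}$ the archimedean part is already written as a finite product of linear factors $(s+k-2i-j)$ rather than as a ratio of $\Gamma_n$-type functions, so the $0/0$ cancellations at $s=1$ are handled by directly cancelling matching linear factors rather than via the cancellation law of Lemma \ref{cancel} and the limit identity \eqref{Id}.
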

As in earlier cases, the theorem above is a consequence of the
functional equation of $E_{k,\mathbb{H}}^{(n)}(Z,s)$ provided by
Kim:
$$
E_{k,\mathbb{H}}^{(n)}(Z,s) =\frac{h_{n,k}(\kappa(n)-k-s)}{h_{n,k}(s)}\,
                                   E_{k,\mathbb{H}}^{(n)}(Z,\kappa(n)-k-s),
$$
 where
 \begin{align*}
 h_{n,k}(s) = &  2^{\frac{k+2s}{2}[\frac{n}{2}]}\prod_{i=0}^{n-1}\xi (2s+k-2i)
                    \cdot \prod_{i=0}^{n-1}\prod_{j=1}^{\frac{k}{2}+n-1-i}(s+k-2i-j) \\
                 & \cdot \prod_{i=0,\,\text{odd}}^{n-1}{(1-2^{2i-k-2s})},   
 \end{align*}
 and $\kappa(n)=2n-1$,  $\xi(s)=\pi^{-s/2}\Gamma(s/2)\zeta(s)$, as before. 
 \begin{Rem}
 \label{Kim}
 Kim proved the functional equation of $E_n(0,s,Z)$ in \cite{Kim}, Theorem C.
 Furthermore, he provided the
 functional equation for the general $E_n(k,s,Z)$ (\cite{Kim3}).
 \end{Rem}
\subsection{Eisenstein series on exceptional domain}
 In \cite{Baily}, Baily studied modular forms on an exceptional domain in
 $\mathbb{C}^{27}$. The origin of the word ``exceptional'' is that
the exceptional group of type $E_7$ acts on this domain.
 Furthermore, Baily defined the Eisenstein series on this
 domain and proved the rationality of the Fourier coefficients.
 Subsequently, Karel \cite{Karel} provided an explicit formula for the Fourier coefficients
 of the holomorphic Eisenstein series.

In \cite{Kim2}, Kim provided the functional equation of the Eisenstein series
$E_{k,s}(Z)$, on an exceptional domain (\cite{Kim2}, Theorem B).
\\
Similar to the previous section, we use the following normalization:
$$
E_{k,\mathbb{O}}(Z,s):=\text{det}(Y)^sE_{k,2s}(Z).
$$
The following theorem was proved by Kim.
\begin{Thm} (Kim \cite{Kim2},  p.198, \text{Remark, (ii)})
\label{Kimexceptional}
{\it 
The Eisenstein series $E_{8,\mathbb{O}}(Z,s)$ has a simple pole at $s=1$,
and the residue
is $\pi^{-3}$ times a singular modular form of weight $8$ with rational
Fourier coefficients.
}
\end{Thm}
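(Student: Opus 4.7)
The plan is to imitate the proofs of Theorems \ref{Residue2} and \ref{SU2}, using Kim's functional equation for $E_{k,\mathbb{O}}(Z,s)$ in \cite{Kim2} specialized to $k=8$. Writing the functional equation in the form $E_{8,\mathbb{O}}(Z,s)=H(s)\,E_{8,\mathbb{O}}(Z,1-s)$ (the reflection being around $s=\tfrac12$, so that the pole at $s=1$ corresponds to holomorphy at $s=0$), the strategy is to split $H(s)=\gamma(s)\,\xi(s)$ into a $\Gamma$-part and a completed-zeta part, show that $\gamma$ is holomorphic at $s=1$ with a rational value, and show that $\xi$ has a simple pole at $s=1$ whose residue carries precisely the factor $\pi^{-3}$ and a rational ratio of Bernoulli numbers.

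\vspace{1mm}

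First, I would verify that $E_{8,\mathbb{O}}(Z,s)$ is holomorphic at $s=0$ and that $E_{8,\mathbb{O}}(Z,0)$ is a (singular) modular form of weight $8$ with rational Fourier coefficients. Since the weight $8$ lies in the singular range for the exceptional tube domain (of rank $3$), this can be obtained either from the analog of Theorem \ref{Wei} in this setting or directly from Karel's explicit formula for the Fourier coefficients \cite{Karel}. Granted this, the proof of the theorem reduces to computing $\mathop{\rm Res}_{s=1}H(s)$.

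\vspace{1mm}

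Next, I would perform the cancellation-law decomposition for $\gamma(s)$ analogous to Lemma \ref{cancel}, reducing the ratio of $\Gamma$-type factors to a short product of elementary $\Gamma$-quotients, each holomorphic at $s=1$ with rational value. For the $\xi$-part I expect, by analogy with the Siegel, Hermitian, and quaternionic cases, a factorization $\xi(s)=\rho(s)\cdot\xi(2-2s)$, where $\rho$ is a ratio of completed Riemann $\xi$-values at positive integer arguments (hence holomorphic and non-zero at $s=1$), and $\xi(2-2s)$ supplies the simple pole with residue $\tfrac14$, exactly as in Lemma \ref{xinm}(2). Converting $\zeta$ at positive even integers into Bernoulli numbers via Euler's formula will then collect exactly three factors of $\pi^{-1}$ from $\rho(1)$, yielding the claimed $\pi^{-3}$ and leaving a rational residue. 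Multiplying the rational $\gamma(1)$ against this residue gives the desired refinement
\[
\mathop{\rm Res}_{s=1}E_{8,\mathbb{O}}(Z,s)=\pi^{-3}\,c_{\mathbb{O}}\,E_{8,\mathbb{O}}(Z,0),\qquad c_{\mathbb{O}}\in\mathbb{Q}^{\times},
\]
which in particular identifies Kim's singular modular form $f$ as a rational multiple of $E_{8,\mathbb{O}}(Z,0)$.

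\vspace{1mm}

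The main obstacle will be the bookkeeping needed to translate Kim's normalization (with its rank-$3$, $E_7$-tube structure) into the explicit $\Gamma$- and $\xi$-factor products used here, and in particular to identify which completed-zeta arguments survive the cancellation and which cancel against their counterparts in the denominator; this is the exceptional-domain counterpart of the index manipulations carried out in Proposition \ref{PP2}. Once the explicit form of $H(s)$ is pinned down, the holomorphy of $\gamma$ at $s=1$, the location and residue of the pole of $\xi$, and the rationality of the overall constant $c_{\mathbb{O}}$ follow routinely from the Euler formula for $\zeta$ at positive even integers, in the same way as in the proofs of Theorems \ref{Residue2} and \ref{SU2}.
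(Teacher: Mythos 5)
Your strategy coincides with the paper's: Theorem \ref{Kimexceptional} itself is quoted from Kim without proof, and the paper's own \textbf{Theorem (Exceptional case)}, which subsumes it, is obtained exactly as you propose — by writing Kim's functional equation as $E_{8,\mathbb{O}}(Z,s)=\frac{A_8(1-s)}{A_8(s)}E_{8,\mathbb{O}}(Z,1-s)$, using holomorphy at $s=0$, isolating the polar factor $\xi(2-2s)$, and converting the surviving $\xi$-values at positive even integers into powers of $\pi$ times Bernoulli numbers, which produces $\pi^{-3}$ times a rational multiple of $E_{8,\mathbb{O}}(Z,0)$. Two small corrections to your sketch: the residue of $\xi(2-2s)$ at $s=1$ is $\tfrac12$, not $\tfrac14$ (the $\tfrac14$ in Lemma \ref{xinm}(2) comes from the argument $2m+2-4s$; compare $\mathop{{\rm Res}}_{s=m}\xi(2m-2s)=\tfrac12$ in $\S$ \ref{analyxi}), and the ``$\gamma$-part'' here is a ratio of finite products of linear factors in which the numerator $\prod_i(1-s+i)$ and the denominator $\prod_i(s+i)$ each vanish to order two at $s=1$, so this $0/0$ must be cancelled explicitly rather than each factor being evaluated separately; neither point affects the qualitative conclusion.
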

This theorem is an analogy of Shimura's results (Theorem \ref{ShimuraMain}, Theorem \ref{ShimuraMainH})
for the exceptional domain case.
By a similar argument as those in the earlier sections, we obtain the following result.
\begin{Thm} \textbf{(Exceptional case)}
{\it 
The Eisenstein series $E_{8,\mathbb{O}}(Z,s)$
has a simple pole at $s=1$, and the residue is
$$
\mathop{{\rm Res}}_{s=1}E_{8,\mathbb{O}}(Z,s)
=\pi^{-3}\cdot \frac{6237}{640}\,E_{8,\mathbb{O}}(Z,0).
$$
}
\end{Thm}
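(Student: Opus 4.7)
The plan is to imitate the three-step strategy used in the Siegel, Hermitian, and quaternionic cases. First I would invoke Kim's functional equation from \cite{Kim2} for $E_{8,\mathbb{O}}(Z,s)$, rewritten in the present normalization $E_{8,\mathbb{O}}(Z,s)=\det(Y)^s E_{8,2s}(Z)$, so as to produce an identity of the shape
$$
E_{8,\mathbb{O}}(Z,s)=H_{8}(s)\cdot E_{8,\mathbb{O}}(Z,s_0-s),
$$
with reflection point $s_0=1$ (this is the analogue of $\tfrac{m+1}{2}$ in Theorem \ref{Residue1} and of $m$ in Theorem \ref{SU1}). As in those cases, the scalar factor $H_{8}(s)$ splits into a $\Gamma$-part $\gamma_{8}(s)$ and a $\xi$-part $\xi_{8}(s)$ built from completed Riemann zeta values at shifts of $s$ dictated by the rank-$3$ exceptional Jordan algebra. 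The goal is to show that $\gamma_{8}(s)$ is holomorphic at $s=1$ with a computable nonzero value, that $\xi_{8}(s)$ has a simple pole at $s=1$ with computable residue, and that $E_{8,\mathbb{O}}(Z,s)$ itself is holomorphic at $s=0$; these three facts together yield
$$
\mathop{\mathrm{Res}}_{s=1}E_{8,\mathbb{O}}(Z,s)=\gamma_{8}(1)\cdot\mathop{\mathrm{Res}}_{s=1}\xi_{8}(s)\cdot E_{8,\mathbb{O}}(Z,0).
$$

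Holomorphy at $s=0$ is supplied by Theorem \ref{Kimexceptional} together with the Baily--Karel rationality results recalled in the section, exactly as holomorphy at $s=0$ was the input used in Theorems \ref{Residue1}, \ref{Residue2}, \ref{SU1}, and \ref{SU2}. For the $\gamma$-part I would apply the same cancellation-law manipulation exhibited in Lemma \ref{cancel} and equation (\ref{Decgam}) to reduce $\gamma_{8}(s)$ to a short finite product of ratios $\Gamma(s-a)/\Gamma(-s-b)$ plus a ``clean'' part holomorphic and nonzero at $s=1$; the identity (\ref{Id}) then evaluates the limit at $s=1$ to an explicit rational. For the $\xi$-part I would decompose
$$
\xi_{8}(s)=\rho_{8}(s)\cdot\xi(2-2s),
$$
in parallel with $\S\ref{analyxi}$, so that the simple pole at $s=1$ comes entirely from $\xi(2-2s)$ with residue $\tfrac12$, while $\rho_{8}(1)$ is a rational multiple of $\pi^{-3}$ obtained by substituting the closed forms $\xi(2k)=-\tfrac12(2\pi)^{2k}B_{2k}/(2k)!\cdot\pi^{-k}\Gamma(k/\cdots)$ into the finitely many surviving factors.

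The main obstacle is combinatorial rather than conceptual: it lies in writing down Kim's functional equation explicitly in the exceptional-domain setting and cataloguing which $\xi$-values appear in $\rho_{8}(s)$ at $s=1$. Once this list is pinned down, the assertion that only a single factor $\xi(2-2s)$ (or its equivalent) produces the pole, and that all surviving $\xi$ at positive even integers contribute rational multiples of known powers of $\pi$, will follow by inspection. The power $\pi^{-3}$ in the final answer should reflect the precise number of uncancelled $\pi$-factors coming from the Jordan-algebraic rank $3$, and assembling the resulting Bernoulli ratios should collapse, after elementary simplification, to the stated rational constant $\tfrac{6237}{640}$. This last step is the one where I would expect to spend the most bookkeeping effort and to need care to avoid sign errors, but it is purely a finite calculation once the $\xi$-factor configuration is in hand.
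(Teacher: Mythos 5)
Your proposal follows exactly the route the paper takes: the paper itself gives no more for this theorem than Kim's functional equation $E_{8,\mathbb{O}}(Z,s)=\frac{A_8(1-s)}{A_8(s)}E_{8,\mathbb{O}}(Z,1-s)$ together with the remark that $E_{8,\mathbb{O}}(Z,0)$ is a rational singular form, leaving the residue computation (pole isolated in $\xi(2-2s)$ with residue $\tfrac12$, the remaining $\Gamma$/polynomial and $\xi$ factors evaluated at $s=1$) to the reader, which is precisely the calculation you outline. Your plan is consistent with and, if anything, more explicit than the paper's own treatment; the only point needing care, which your appeal to the identity (\ref{Id}) already covers, is that the polynomial factors of $A_8(1-s)$ and $A_8(s)$ each vanish to order two at $s=1$ and must be seen to cancel before the pole can be attributed solely to $\xi(2-2s)$.
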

It follows from \cite{Kim2} that $E_{8,\mathbb{O}}(Z,0)$ is an exceptional singular
modular form of weight 8 with rational Fourier coefficients.

As in the other cases, the theorem above arises from the
functional equation of $E_{k,\mathbb{O}}(Z,s)$ provided by Kim (in the case $k=0$, see Theorem B in \cite{Kim2}, 
for general $k$, see \cite{Kim3}).
$$
E_{k,\mathbb{O}}(Z,s) =\frac{A_k(9-k-s)}{A_k(s)}
                                   E_{k,\mathbb{O}}(Z,9-k-s),
$$
where
$$
 A_k(s):=\xi(k+2s)\xi(k+2s-4)\xi(k+2s-8)
 \prod_{i=\frac{k}{2}-4}^{k-1}(s+i)\cdot \prod_{i=\frac{k}{2}-6}^{k-5}(s+i)\cdot
                           \prod_{i=\frac{k}{2}-8}^{k-9}(s+i).
$$
where $\xi(s)=\pi^{-s/2}\Gamma(s/2)\zeta(s)$.
\section{Remark}
\label{fremark}

The results obtained in this study are formulated based on the Jordan algebra.

We denote by $d$ (resp. $r$) the dimension (resp. rank) of the Jordan
Algebra, $J$. Moreover, we set
$$
\kappa:=\frac{d}{r}-1.
$$
Subsequently, this value is $\frac{n-1}{2}$ in the Siegel modular case,
$n-1$ in the Hermitian modular case, $2n-2$ in the quaternion modular case,
and $8$ in the exceptional modular case.

Our results can be summarized as follows. Let $\Omega$ be a formally real
simple Jordan algebra.
We denote by $E_{\kappa}(Z,s)$ 
the Eisenstein series  of weight $\kappa$ defined
on the corresponding tube domain, $\mathcal{D}=J+i\,\Omega$,
($\Omega=\text{exp}J$). Then, $E_{\kappa}(Z,s)$
has a simple pole
at $s=1$ at the most, and the residue can be written as
$$
\mathop{{\rm Res}}_{s=1}E_{\kappa}(Z,s)=\pi^{-r}\cdot c\,E_{\kappa}(Z,0),
$$
where $c\in\mathbb{Q}$ and $E_{\kappa}(Z,0)$ is a modular form of
weight $\kappa$ with rational Fourier
coefficients.



\end{document}